\numberwithin{equation}{section}
\newtheorem{theorem}{Theorem}[section]
\newtheorem{proposition}[theorem]{Proposition}
\newtheorem{lemma}[theorem]{Lemma}
\newtheorem{corollary}[theorem]{Corollary}
\newtheorem{theorem*}{Theorem}
\theoremstyle{definition}
\newtheorem{definition}[theorem]{Definition}
\theoremstyle{remark}
\newtheorem{remark}[theorem]{Remark}
\numberwithin{equation}{section} \theoremstyle{plain}
\newtheorem*{thm*}{Main Theorem}
\newtheorem*{cor*}{Corollary}
\newtheorem*{lem*}{Lemma}
\newtheorem*{prop*}{Proposition}
\newtheorem*{rem*}{Remark}
\newtheorem*{exa*}{Example}
\newtheorem*{df*}{Definition}
\newtheorem*{conj*}{Conjecture}
\newtheorem*{Fa*}{Fact}
\newtheorem*{Qu*}{Question}
\newtheorem*{ack*}{ACKNOWLEDGEMENTS}
\newcommand{\pf}{\noindent\begin {proof}}
\newcommand{\epf}{\end{proof}}
\newcommand{\Ext}{\mbox{\rm Ext}}
\newcommand{\Hom}{\mbox{\rm Hom}}
\def\Im{\mathop{\rm Im}\nolimits}
\def\Coker{\mathop{\rm Coker}\nolimits}
\def\Tr{\mathop{\rm Tr}\nolimits}
\def\mod{\mathop{\rm mod}\nolimits}
\def\Mod{\mathop{\rm Mod}\nolimits}
\def\fd{\mathop{\rm fd}\nolimits}
\def\id{\mathop{\rm id}\nolimits}
\def\pd{\mathop{\rm pd}\nolimits}
\def\sup{\mathop{\rm sup}\nolimits}
\def\inf{\mathop{\rm inf}\nolimits}
\def\soc{\mathop{\rm soc}\nolimits}
\def\findim{\mathop{\rm fin.dim}\nolimits}
\def\Gpd{\mathop{\rm G\text{-}pd}\nolimits}
\def\Hom{\mathop{\rm Hom}\nolimits}
\def\Ext{\mathop{\rm Ext}\nolimits}
\def\sup{\mathop{\rm sup}\nolimits}
\def\lim{\mathop{\underrightarrow{\rm lim}}\nolimits}
\begin{document}
\begin{center}
{\large \bf Auslander-type Conditions and Weakly Gorenstein Algebras}
\footnote{The research was partially supported by NSFC (Grant Nos. 12371038, 12171207).}

\vspace{0.5cm}
Zhaoyong Huang\\
{\footnotesize\it Department of Mathematics, Nanjing University,
Nanjing 210093, Jiangsu Province, P.R. China}\\
{\footnotesize\it E-mail: huangzy@nju.edu.cn}
\end{center}

\bigskip
\centerline {\bf  Abstract}
\bigskip
\leftskip10truemm \rightskip10truemm \noindent
Let $R$ be an Artin algebra. Under certain Auslander-type conditions, we give some equivalent characterizations
of (weakly) Gorenstein algebras in terms of the properties of Gorenstein projective modules and modules satisfying
Auslander-type conditions. As applications, we provide some support for several homological conjectures.
In particular, we prove that if $R$ is left quasi Auslander, then
$R$ is Gorenstein if and only if it is (left and) right weakly Gorenstein; and that if $R$
satisfies the Auslander condition, then $R$ is Gorenstein if and only if it is left or right
weakly Gorenstein. This is a reduction of an Auslander--Reiten's conjecture, which states that $R$ is Gorenstein
if $R$ satisfies the Auslander condition. 
\vbox to 0.2cm{}\\ \\
{\it 2020 Mathematics Subject Classification:} 16E65, 16E10, 18G25.\\ 
{\it Keywords:} Auslander-type conditions, (Weakly) Gorenstein algebras, Self-injective dimension,
Gorenstein projective modules, Homological conjectures.
\leftskip0truemm \rightskip0truemm

\section{\bf Introduction}
A left and right Noetherian ring is called {\it Iwanaga--Gorenstein} ({\it Gorenstein} for short)
if its left and right self-injective dimensions are finite.
The fundamental theorem in \cite{B2} states that a commutative Noetherian ring $R$
is Gorenstein if and only if the flat dimension of the $i$-th term in a
minimal injective coresolution of $R$ as an $R$-module is at most
$i-1$ for any $i \geq 1$. In the non-commutative case, Auslander proved
that the latter condition is left-right symmetric \cite[Theorem 3.7]{FGR}; in
this case, $R$ is said to satisfy the {\it Auslander condition}.
Thus, the above result in \cite{B2} can be restated as follows: A commutative
Noetherian ring is Gorenstein if and only if it satisfies the Auslander condition.
Based on it, Auslander and Reiten \cite{AR1} conjectured that an Artin algebra
satisfying the Auslander condition is Gorenstein. We call this conjecture {\bf ARC} for short.
It is situated between the well known Nakayama conjecture and the generalized Nakayama conjecture
\cite[p.2]{AR1}. All these conjectures remain still open.

As a generalization of the notion of the Auslander condition, Huang and Iyama \cite{HI}
introduced the notion of Auslander-type conditions of rings as follows. For any
$m\geq 0$, a left and right Noetherian ring is said to be $G_{\infty}(m)$ if
for any finitely generated left $R$-module $M$ and $i\geq 1$,
it holds that $\Ext_{R^{op}}^{0\leq j\leq i-1}(X,R)=0$ for any right $R$-submodule $X$
of $\Ext_R^{i+m}(M,R)$; equivalently, if the
flat dimension of the $i$-th term in a minimal injective coresolution of $R_R$ is at most
$i+m-1$ for any $i\geq 1$ \cite[p.99]{HI}. Non-commutative rings satisfying Auslander-type conditions
are analogues of commutative Gorenstein rings. Such rings play a crucial role in homological algebra,
representation theory of algebras and non-commutative algebraic
geometry, see \cite{AR1,AR2,EHIS,FGR,H1,H2,HI,IS,Iy,Mi,S,W}
and references therein. Recently, we introduced
modules satisfying Auslander-type condition $G_{\infty}(m)$ for any $m\geq 0$ \cite{H5},
see Definition \ref{def-2.4} below.

As a generalization of the notion of Gorenstein algebras, Ringel and Zhang \cite{RZ} introduced that of weakly Gorenstein
algebras. Marczinzik \cite{M} posed the following question: Is a left weakly Gorenstein Artin algebra also right weakly
Gorenstein? For the sake of convenience, we state this question as the following conjecture.

\vspace{0.2cm}

{\bf Weakly-Gorenstein Symmetry Conjecture} ({\bf WGSC} for short): An Artin algebra is left weakly Gorenstein
if and only if it is right weakly Gorenstein.

\vspace{0.2cm}

It is related to the following famous conjecture.

\vspace{0.2cm}

{\bf Gorenstein Symmetry Conjecture} ({\bf GSC} for short): For an Artin algebra,
its left self-injective dimension is finite if and only if so is its right self-injective dimension.

\vspace{0.2cm}

Note that for a left and right Noetherian ring, its left and right self-injective dimensions coincide
if both of them are finite \cite[Lemma A]{Z}. Thus, an equivalent version of {\bf GSC}
is that for an Artin algebra, its left and right self-injective dimensions coincide.

It was proved that {\bf WGSC} implies {\bf GSC} \cite[p.33]{RZ}, and that {\bf GSC} holds true for Artin algebras satisfying
the Auslander condition \cite[Corollary 5.5(b)]{AR1}. We proved that an Artin algebra satisfying the Auslander condition
is Gorenstein if and only if the subcategory of finitely generated modules satisfying the Auslander condition is contravariantly
finite \cite[Theorem 5.8]{H5}. The aim of this paper is to give some equivalent characterizations of (weakly) Gorenstein
algebras under certain Auslander-type conditions, and then provide some support for these conjectures mentioned above.

The paper is organized as follows.
In Section 2, we give some terminology and preliminary results.
Let $R$ be an arbitrary ring. We use $\mathcal{GP}(\Mod R)$ to denote the category of
Gorenstein projective left $R$-modules. For any $m\geq 0$, we use $\mathcal{GP}(\Mod R)^{\leq m}$
to denote the category of left $R$-modules with Gorenstein projective dimension at most $m$,
and use $\mathcal{G}_{\infty}(m)$ to denote the category of left $R$-modules being $G_{\infty}(m)$.

In Section 3, $R$ is an arbitrary ring. We prove that any module in $\mathcal{G}_{\infty}(m)$
is isomorphic to a kernel (resp. a cokernel) of a homomorphism from a module with finite flat dimension
to certain syzygy module, and as a consequence we get that if a left $R$-module $M$ satisfies
the Auslander condition (that is, $M\in\mathcal{G}_{\infty}(0)$), then $M$ is an $\infty$-flat syzygy module,
and the converse holds true if $_RR$ satisfies the Auslander condition (Theorem \ref{thm-3.3}).
For any $m,s\geq 0$, we prove that $\mathcal{GP}(\Mod R)=\mathcal{G}_{\infty}(m)$
if and only if $\mathcal{GP}(\Mod R)^{\leq s}=\mathcal{G}_{\infty}(m+s)$
(Proposition \ref{prop-3.5}). We also prove that if $R$ is a Gorenstein ring,
then any module in $\mathcal{G}_{\infty}(m)$ has Gorenstein projective dimension at most $m$
(Theorem \ref{thm-3.6}).

In Section 4, $R$ is an Artin algebra. We get some equivalent characterizations for
$_RR\in\mathcal{G}_{\infty}(m)$ and $R$ being Gorenstein as follows. The case for $m=0$ in the following result
except the statement (2) has been obtained in \cite[Corollary 3.5]{WLWH}, which is the Gorenstein version of \cite[Theorem 5.9]{H5}.

\begin{theorem}\label{thm-1.1} {\rm (Theorem \ref{thm-4.6})}
Let $m\geq 0$. Then the following statements are equivalent.
\begin{enumerate}
\item[$(1)$] $_RR\in G_{\infty}(m)$ and $R$ is Gorenstein.
\item[$(2)$] $_RR\in G_{\infty}(m)$ and the left self-injective dimension of $R$ is finite.
\item[$(3)$] $\mathcal{GP}(\Mod R)\subseteq\mathcal{G}_{\infty}(m)\subseteq\mathcal{GP}(\Mod R)^{\leq m}$.
\item[$(4)$] $\mathcal{GP}(\Mod R)^{\leq s}\subseteq\mathcal{G}_{\infty}(m+s)\subseteq
\mathcal{GP}(\Mod R)^{\leq m+s}$ for any $s\geq 0$.
\item[$(i)_{f}$] The finitely generated version of $(i)$ with $i=3,4$.
\end{enumerate}
\end{theorem}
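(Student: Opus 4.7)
The proof will establish the cycle $(1) \Leftrightarrow (2)$, $(1) \Rightarrow (4) \Rightarrow (3) \Rightarrow (1)$, with the finitely generated variants $(3)_f$ and $(4)_f$ handled by restricting every argument to $\mod R$ (for an Artin algebra the injective cogenerator $DR$ is already finitely generated, so nothing essential is lost in passing to the finitely generated setting).

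The implication $(1) \Rightarrow (2)$ is immediate from the definition of a Gorenstein algebra. For $(2) \Rightarrow (1)$, one must deduce finiteness of the right self-injective dimension from finiteness of the left one, under the additional hypothesis $_RR \in G_\infty(m)$. My approach is to combine the kernel/cokernel realization from Theorem~\ref{thm-3.3} with a dimension-shifting argument on the minimal injective coresolution of $R_R$: the $G_\infty(m)$ condition bounds the flat dimensions of the terms of this coresolution, and a finite bound on the left self-injective dimension forces vanishing of high $\Ext$ groups into $R$, which together should force the coresolution of $R_R$ to be eventually zero.

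For $(1) \Rightarrow (4)$, Theorem~\ref{thm-3.6} applied to the Gorenstein ring $R$ with parameter $m+s$ immediately delivers the right inclusion $\mathcal{G}_\infty(m+s) \subseteq \mathcal{GP}(\Mod R)^{\leq m+s}$. For the left inclusion $\mathcal{GP}(\Mod R)^{\leq s} \subseteq \mathcal{G}_\infty(m+s)$, I proceed by induction on $s$. The base case $s=0$ uses that $R \in \mathcal{G}_\infty(m)$ (hence all projectives are), together with closure of $\mathcal{G}_\infty(m)$ under the kernels appearing in totally acyclic projective complexes, to place every Gorenstein projective module in $\mathcal{G}_\infty(m)$. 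The inductive step uses a short exact sequence $0 \to K \to G \to M \to 0$ with $G$ Gorenstein projective and $\Gpd K \leq s-1$, combined with the standard shifting property of $\mathcal{G}_\infty$: an extension of an object of $\mathcal{G}_\infty(m+s-1)$ by one of $\mathcal{G}_\infty(m)$ lies in $\mathcal{G}_\infty(m+s)$. The implication $(4) \Rightarrow (3)$ is then the specialization $s=0$.

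The subtlest step is $(3) \Rightarrow (1)$. The inclusion $R \in \mathcal{GP}(\Mod R) \subseteq \mathcal{G}_\infty(m)$ produces $_RR \in G_\infty(m)$ at once. For Gorensteinness, I plan to show that the injective cogenerator $DR$ lies in $\mathcal{G}_\infty(m)$; the second inclusion of (3) then forces $\Gpd DR \leq m$, and the standard fact that finite Gorenstein projective dimension of $DR$ over an Artin algebra implies $R$ is Gorenstein finishes the proof. Placing $DR$ in $\mathcal{G}_\infty(m)$ should follow from transporting the $G_\infty(m)$ property of $_RR$ across the standard $k$-duality between $\mod R$ and $\mod R^{op}$. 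The principal obstacles I expect are precisely (a) the symmetry step $(2) \Rightarrow (1)$ and (b) the verification $DR \in \mathcal{G}_\infty(m)$ in $(3) \Rightarrow (1)$; once these are in hand, the remaining implications, as well as their finitely generated analogues, are routine bookkeeping via Theorems~\ref{thm-3.3} and~\ref{thm-3.6} together with Proposition~\ref{prop-3.5}.
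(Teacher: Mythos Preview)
Your plan for $(3)\Rightarrow(1)$ contains a genuine gap. The module $DR=D(R_R)$ is injective in $\Mod R$, so its minimal injective coresolution is $0\to DR\to DR\to 0$; hence $DR\in\mathcal{G}_\infty(m)$ if and only if $\fd_R DR\leq m$, i.e.\ $\id_{R^{op}}R\leq m$. This is (a sharpening of) exactly what you are trying to prove, and it does \emph{not} follow from $_RR\in\mathcal{G}_\infty(m)$ by duality: the duality $D$ sends the injective coresolution of $_RR$ to a projective resolution of $D(_RR)$ in $\mod R^{op}$, which gives no control over $\fd_R D(R_R)$. So the step ``$DR\in\mathcal{G}_\infty(m)$'' cannot be obtained the way you suggest, and your route from $(3)$ to $(1)$ does not close.

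The paper's argument avoids $DR$ entirely. It passes through the finitely generated version and proves $(5)\Rightarrow(2)$ using Lemma~\ref{lem-4.1} (via Proposition~\ref{prop-4.3}(1)) together with Lemma~\ref{lem-4.4}: from $(5)$ one gets, using Lemma~\ref{lem-3.2}, that $\Omega^\infty(\mod R)\subseteq\mathcal{G}_\infty(m)\cap\mod R\subseteq\mathcal{GP}(\mod R)^{\leq m}\subseteq{^{\bot_{\geq m+1}}{_RR}}\cap\mod R$, while $_RR\in\mathcal{G}_\infty(m)$ forces a uniform bound on $\pd_RE^i({_RR})$; Lemma~\ref{lem-4.1} then yields $\id_RR<\infty$. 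This splitting-off argument in the minimal injective coresolution is the key idea you are missing. A smaller point: your sketch of $(2)\Rightarrow(1)$ speaks of the $G_\infty(m)$ condition bounding flat dimensions in the coresolution of $R_R$, but $_RR\in\mathcal{G}_\infty(m)$ constrains only the \emph{left} coresolution; the paper imports this implication from \cite{H1}. Finally, in your inductive step for the left inclusion of $(4)$ you assert that a quotient of $G\in\mathcal{G}_\infty(m)$ by $K\in\mathcal{G}_\infty(m+s-1)$ lies in $\mathcal{G}_\infty(m+s)$; this is true but not immediate, and the paper handles it in one stroke via Lemma~\ref{lem-3.4}.
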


Under certain Auslander-type conditions, we get some equivalent characterizations of (weakly) Gorenstein algebras.

\begin{theorem}\label{thm-1.2} {\rm (Theorem \ref{thm-4.9})}
If $_RR\in\mathcal{G}_{\infty}(m)$ and $R_R\in\mathcal{G}_{\infty}(m')^{op}$
with $m,m'\geq 0$, then the following statements are equivalent.
\begin{enumerate}
\item[$(1)$] $R$ is Gorenstein.
\item[$(2)$] $R$ is left and right weakly Gorenstein.
\item[$(3)$] The left self-injective dimension of $R$ is finite.
\item[$(4)$] $R$ is left weakly Gorenstein.
\item[$(5)$] $\mathcal{GP}(\Mod R)$ coincides with the left orthogonal category of projective left $R$-modules.
\item[$(i)^{op}$] The opposite version of $(i)$ with $3\leq i\leq 5$.
\end{enumerate}
\end{theorem}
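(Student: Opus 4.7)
I would establish the equivalences via a cycle in the left-sided statements, together with a parallel argument on the right. Specifically, my plan is the cycle
\[
(1) \Rightarrow (2) \Rightarrow (4) \Leftrightarrow (5) \Rightarrow (3) \Rightarrow (1),
\]
with the analogous cycle for $(3)^{op}, (4)^{op}, (5)^{op}$ obtained by interchanging the roles of $R$ and $R^{op}$ and invoking the hypothesis $R_R\in\mathcal{G}_\infty(m')^{op}$ in place of $_RR\in\mathcal{G}_\infty(m)$.

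Most of the implications are either classical or immediate from earlier results. The implication $(1)\Rightarrow(2)$ is the standard fact that a Gorenstein Artin algebra is left and right weakly Gorenstein, which holds because finite self-injective dimension forces every semi-Gorenstein-projective module to be Gorenstein projective. $(2)\Rightarrow(4)$ is trivial by restriction to the left side, and $(4)\Leftrightarrow(5)$ holds essentially by the definition of left weakly Gorenstein (the left orthogonal ${}^{\perp}R$ coinciding with $\mathcal{GP}(\Mod R)$). $(1)\Rightarrow(3)$ is trivial, and $(3)\Rightarrow(1)$ is exactly Theorem~\ref{thm-1.1} ($(2)\Rightarrow(1)$) applied under the standing hypothesis $_RR\in\mathcal{G}_\infty(m)$.

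The main step is $(5)\Rightarrow(3)$. My plan is to verify, under $(5)$ and $_RR\in\mathcal{G}_\infty(m)$, the two-sided inclusion
\[
\mathcal{GP}(\Mod R)\subseteq\mathcal{G}_\infty(m)\subseteq\mathcal{GP}(\Mod R)^{\leq m},
\]
so that Theorem~\ref{thm-1.1} ($(3)\Rightarrow(1)$) yields that $R$ is Gorenstein and hence $\id{_R}R<\infty$. The first inclusion should be essentially automatic, since any Gorenstein projective module satisfies $\Ext^{\geq 1}(-,R)=0$, making the $G_\infty(m)$ Ext-vanishing hold vacuously. For the second inclusion, given $M\in\mathcal{G}_\infty(m)$, I would appeal to the structural results of Section~3 -- particularly Theorem~\ref{thm-3.3} and Proposition~\ref{prop-3.5} -- in order to transfer the $G_\infty(m)$-condition through syzygies and show that the $m$-th syzygy $\Omega^{m}M$ belongs to ${}^{\perp}R$; then $(5)$ forces $\Omega^{m}M\in\mathcal{GP}(\Mod R)$, which bounds $\Gpd M\leq m$. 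The hard part will be making this passage precise: the defining $G_\infty(m)$ condition controls $\Ext^{<i}$ of right submodules of $\Ext^{i+m}(M,R)$, and one must convert this combinatorial control into the cleaner statement that $\Omega^{m}M$ is semi-Gorenstein-projective, for which the machinery of Section~3 is exactly designed.

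Finally, the equivalences of $(1)$ with $(3)^{op}, (4)^{op}, (5)^{op}$ follow by running the same cycle over $R^{op}$, using the symmetric hypothesis $R_R\in\mathcal{G}_\infty(m')^{op}$ together with the left-right symmetry of $(1)$ and $(2)$. Combining both cycles gives the full list of equivalences in the theorem.
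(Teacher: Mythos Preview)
Your cycle breaks at two places.

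First, $(4)\Leftrightarrow(5)$ is \emph{not} ``essentially by definition'': condition $(4)$ (left weakly Gorenstein) is the equality $\mathcal{GP}(\mod R)={^{\bot}{_RR}}\cap\mod R$ inside $\mod R$, whereas $(5)$ is the big-module statement $\mathcal{GP}(\Mod R)={^{\bot}\mathcal{P}(\Mod R)}$. Only $(5)\Rightarrow(4)$ is trivial; the paper obtains $(4)\Rightarrow(5)$ by passing through $(1)$.

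Second, and more seriously, your argument for $(5)\Rightarrow(3)$ rests on a misreading of Definition~\ref{def-2.4}. For a \emph{module} $M$, the condition $M\in\mathcal{G}_\infty(m)$ means $\fd_R E^i(M)\le i+m$ for all $i\ge 0$; it says nothing about $\Ext^{\ge 1}_R(M,R)$. The $\Ext$-description you quote is the equivalent form of the \emph{ring} $R$ being $G_\infty(m)^{op}$, applied to arbitrary test modules. Consequently your justification of $\mathcal{GP}(\Mod R)\subseteq\mathcal{G}_\infty(m)$ (``the $\Ext$-vanishing holds vacuously'') is incorrect, although the inclusion itself is true for the right reason: $_RR\in\mathcal{G}_\infty(m)$ forces every projective into $\mathcal{G}_\infty(m)$ (Lemma~\ref{lem-3.1}), and then Lemma~\ref{lem-3.2} pushes this to $\Omega^\infty(\Mod R)\supseteq\mathcal{GP}(\Mod R)$. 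The fatal gap is the other inclusion: there is no mechanism in Section~3 that converts $M\in\mathcal{G}_\infty(m)$ into $\Omega^m M\in{^{\bot}\mathcal{P}(\Mod R)}$. Indeed, Theorem~\ref{thm-3.6}(1) obtains $\mathcal{G}_\infty(m)\subseteq\mathcal{GP}(\Mod R)^{\le m}$ only \emph{after} assuming $R$ is Gorenstein, so invoking Theorem~\ref{thm-1.1}$(3)\Rightarrow(1)$ here would be circular. As a reality check, whenever $\id_{R^{op}}R\le m$ every module lies in $\mathcal{G}_\infty(m)$, yet $\Omega^m M\in{^{\bot}R}$ for all $M$ would force $\id_RR\le m$, which is not available without further input.

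The paper closes the loop by crossing sides rather than staying on the left. The key is Proposition~\ref{prop-4.7}: under the two-sided hypothesis $_RR\in\mathcal{G}_\infty(m)$ and $R_R\in\mathcal{G}_\infty(m')^{op}$ one has $\Omega^\infty(\mod R)=\mathcal{T}(\mod R)$, and then \emph{right} weak Gorensteinness yields $\Omega^\infty(\mod R)\subseteq\mathcal{GP}(\mod R)\subseteq{^{\bot}{_RR}}$ via the transpose (Lemma~\ref{lem-2.2}), whence $\id_RR<\infty$ by Lemma~\ref{lem-4.1}/Proposition~\ref{prop-4.3}. Thus $(4)^{op}\Leftrightarrow(3)$ and, symmetrically, $(4)\Leftrightarrow(3)^{op}$; combined with $(1)\Leftrightarrow(3)\Leftrightarrow(3)^{op}$ from Theorem~\ref{thm-4.6} this gives the full equivalence. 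The moral is that left weak Gorensteinness controls the \emph{right} self-injective dimension, not the left, and both Auslander-type hypotheses are genuinely used to make that crossing.
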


Furthermore, we consider algebras satisfying small Auslander-type conditions.
We prove that if $R$ is left quasi Auslander (that is, $_RR\in \mathcal{G}_{\infty}(1)$), then $R$ is Gorenstein if and only if
the left or right self-injective dimension of $R$ is finite, and if and only if $R$ is (left and) right weakly Gorenstein
(Theorem \ref{thm-4.10}). Moreover, we get some equivalent characterizations of Auslander--Gorenstein algebras (Theorem \ref{thm-4.11}),
which yields that if $R$ satisfies the Auslander condition (that is, $_RR\in \mathcal{G}_{\infty}(0)$),
then $R$ is Gorenstein if and only if $R$ is left or right weakly Gorenstein (Corollary \ref{cor-4.12}).

Consequently, we conclude that
\begin{enumerate}
\item[$(1)$] Over an Artin algebra $R$ satisfying $_RR\in\mathcal{G}_{\infty}(m)$ and $R_R\in\mathcal{G}_{\infty}(m')^{op}$
with $m,m'\geq 0$, both {\bf WGSC} and {\bf GSC} hold true (Theorem \ref{thm-1.2}).
\item[$(2)$] Over a left quasi Auslander Artin algebra, {\bf GSC} holds true,
but we do not know whether {\bf WGSC} holds true or not (Theorem \ref{thm-4.10}).
\item[$(3)$] Assume that an Artin algebra $R$ satisfies the Auslander condition (equivalently,
$_RR\in\mathcal{G}_{\infty}(0)$ and $R_R\in\mathcal{G}_{\infty}(0)^{op}$). Then both {\bf WGSC} and {\bf GSC} hold true for $R$
by putting $m=m'=0$ in Theorem \ref{thm-1.2}. Note that {\bf GSC} holds true for an Artin algebra $R$
satisfying the Auslander condition has been obtained in \cite[Corollary 5.5(b)]{AR1}.
Moreover, we have that $R$ is Gorenstein if and only if it is left or right weakly Gorenstein (Corollary \ref{cor-4.12}).
This is a reduction of {\bf ARC},
since Gorenstein algebras are left and right weakly Gorenstein, but the converse does not hold true in general \cite{M,R,RZ}.
\end{enumerate}

\section{\bf Preliminaries}

Throughout this paper, all rings are associative rings with unit and all modules are unital.
For a ring $R$, we use $\Mod R$ to denote the category of left $R$-modules, and use $\mod R$
to denote the category of finitely generated left $R$-modules. For a module $M\in\Mod R$,
we use $\pd_RM$, $\fd_RM$ and $\id_RM$ to denote the projective, flat and injective dimensions
of $M$, respectively.

Let $R$ be a ring. We write $(-)^*=\Hom(-,R)$. Let $M\in\Mod R$ and let $\sigma_M:M\to M^{**}$
via $\sigma_M(x)(f)=f(x)$ for any $x\in M$ and $f\in M^*$ be the canonical evaluation homomorphism.
Recall that $M$ is called {\it torsionless} if $\sigma_M$ is a monomorphism, and is called
{\it reflexive} if $\sigma_M$ is an isomorphism. Let
$$\cdots\to P_i\to\cdots\to P_1\to P_0\to M\to 0$$
and
$$0\to M\to E^0(M)\to E^1(M)\to\cdots\to E^i(M)\to\cdots$$
be a projective resolution and a minimal injective coresolution of $M$, respectively.
For any $n\geq 1$, $\Omega^n(M):=\Im(P_n\to P_{n-1})$ and $\Omega^{-n}(M):=\Im(E^{n-1}(M)\to E^n(M))$
are called the {\it $n$-syzygy} and {\it $n$-cosyzygy} of $M$, respectively. In particular, $\Omega^0(M)=M$.
Note that the $n$-syzygy of $M$ is defined up to projective summands. We write
$$\Omega^{n}(\Mod R):=\{M\in\Mod R\mid M\ \text{is an}\ n\text{-syzygy module}\}\ \text{for any}\ n\geq 1,$$
$$\Omega^{\infty}(\Mod R):=\cap_{n\geq 1}\Omega^{n}(\Mod R)\ \ \text{and}\ \
\Omega^{\infty}(\mod R):=\Omega^{\infty}(\Mod R)\cap\mod R.$$
For a subcategory $\mathcal{X}$ of $\Mod R$, we write
$${^{\bot}\mathcal{X}}:=\{M\in\Mod R\mid\Ext^{\geq 1}_{R}(M,X)=0\ \text{for any}\ X\in\mathcal{X}\},$$
and write ${^{\bot}X}:={^{\bot}\mathcal{X}}$ if $\mathcal{X}=\{X\}$.

Let $R$ be a left and right Noetherian ring and $M\in\mod R$, and let
$$P_1\buildrel{f}\over\longrightarrow P_0\to M\to 0$$
be a projective presentation of $M$ in $\mod R$. Recall from \cite{AB} that $\Tr M:=\Coker f^*$
is called the {\it transpose} of $M$. Note that the transpose of $M$ is defined up to projective summands \cite[p.51]{AB}.
A module $M\in\mod R$ is called {\it $\infty$-torsionfree} if $\Tr M\in{^{\bot}R_R}\cap\mod R^{op}$.
We write $$\mathcal{T}(\mod R):=\{M\in\mod R\mid M\ \text{is}\ \infty\text{-torsionfree}\}.$$
By \cite[Theorem 2.17]{AB}, we have $\mathcal{T}(\mod R)\subseteq\Omega^{\infty}(\mod R)$.

\begin{definition} \label{def-2.1} {\rm (\cite{AB})}.
Let $R$ be a left and right Noetherian ring. A module $M\in\mod R$ is said to
{\it have Gorentein dimension zero} if
$$\Ext^{\geq 1}_{R}(M,R)=0=\Ext^{\geq 1}_{R^{op}}(\Tr M,R);$$
equivalently, if $M$ is reflexive and
$$\Ext^{\geq 1}_{R}(M,R)=0=\Ext^{\geq 1}_{R^{op}}(M^*,R).$$
\end{definition}

Let $R$ be a ring. We write $\mathcal{P}(\Mod R):=\{$projective left $R$-modules$\}$.
Recall from \cite{EJ} that a module $M\in\Mod R$ is called {\it Gorenstein projective}
if there exists an exact sequence
$$\cdots\to P_1\to P_0\to P^0\to P^1\to\cdots$$
in $\Mod R$ with all $P_i,P^i$ in $\mathcal{P}(\Mod R)$, such that it remains exact after applying the
functor $\Hom_R(-,P)$ for any $P\in\mathcal{P}(\Mod R)$ and $M\cong\Im(P_0\to P^0)$.
We write $$\mathcal{GP}(\Mod R):=\{\text{Gorenstein projective left}\ R\text{-modules}\}\ \text{and}\
\mathcal{GP}(\mod R):=\mathcal{GP}(\Mod R)\cap\mod R.$$
It is well known that over a left and right noetherian ring, a finitely generated module has Gorenstein dimension zero
if and only if it is Gorenstein projective \cite{AM,EJ}, and thus
$$\mathcal{GP}(\mod R)=({^{\bot}{_RR}}\cap\mod R)\cap\mathcal{T}(\mod R).$$
Now, finitely generated modules having Gorenstein dimension zero over left and right noetherian rings
are usually referred to as Gorenstein projective modules.

For any $M\in\mod R$ (resp. $\mod R^{op}$),
it is well known that $M$ and $\Tr\Tr M$ are projectively equivalent. So we have the following observation.

\begin{lemma}\label{lem-2.2}
Let $R$ be a left and right Noetherian ring. Then
a module $M\in\mod R$ $($resp. $\mod R^{op})$ is Gorenstein projective if and only if so is $\Tr M$.
\end{lemma}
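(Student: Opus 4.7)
The plan is to reduce the statement to the Ext-vanishing characterization of Gorenstein dimension zero and exploit the fact that the transpose is (up to projective summands) an involution on finitely generated modules.

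First, I would use the characterization recalled just before the lemma: for a left and right Noetherian ring, a finitely generated module has Gorenstein dimension zero if and only if it is Gorenstein projective. By Definition \ref{def-2.1}, $M \in \mod R$ is Gorenstein projective if and only if
\[
\Ext^{\geq 1}_{R}(M,R)=0=\Ext^{\geq 1}_{R^{op}}(\Tr M,R).
\]
Applying the same criterion to $\Tr M \in \mod R^{op}$, the module $\Tr M$ is Gorenstein projective if and only if
\[
\Ext^{\geq 1}_{R^{op}}(\Tr M,R)=0=\Ext^{\geq 1}_{R}(\Tr \Tr M,R).
\]

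Next I would invoke the fact, recalled in the excerpt, that $M$ and $\Tr\Tr M$ are projectively equivalent; hence $\Ext^{\geq 1}_{R}(M,R) \cong \Ext^{\geq 1}_{R}(\Tr\Tr M,R)$. Comparing the two displayed conditions shows that they coincide, which gives the equivalence of the Gorenstein projectivity of $M$ and of $\Tr M$. The argument for $M \in \mod R^{op}$ is symmetric, just swapping the roles of $R$ and $R^{op}$.

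I do not anticipate a genuine obstacle here: once the Ext characterization is in hand, the proof is a one-line symmetry argument powered by the involutive nature of $\Tr$. The only mild subtlety is to note that projective summands do not affect the vanishing of positive Ext into $R$, which is why the projective equivalence of $M$ and $\Tr\Tr M$ suffices.
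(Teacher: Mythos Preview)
Your proposal is correct and matches the paper's approach: the paper presents Lemma~\ref{lem-2.2} as an immediate observation from the projective equivalence of $M$ and $\Tr\Tr M$, without writing out a formal proof. Your argument simply spells out the details of this observation using Definition~\ref{def-2.1}, which is exactly what is intended.
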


Recall from \cite{FGR} that a left and right Noetherian ring $R$ is said to satisfy the
{\it Auslander condition} if $\fd_RE^i(_RR)\leq i$ for any $i \geq 0$.
As a generalization of rings satisfying the Auslander condition,
Huang and Iyama \cite{HI} introduced the notion of rings satisfying
Auslander-type conditions, which was extended to that of modules
satisfying Auslander-type conditions as follows.

\begin{definition}\label{def-2.4} {\rm (\cite{H5})}
Let $R$ be a ring and let $m\geq 0$. A module $M\in\Mod R$ is said to be {\it $G_{\infty}(m)$} if
$\fd_RE^i(M)\leq i+m$ for any $i\geq 0$. In particular, $M$ is said to satisfy the
{\it Auslander condition} if it is $G_{\infty}(0)$.
\end{definition}

Let $R$ be a left and right Noetherian ring. Then $_RR$ is $G_{\infty}(m)$ if and only if the ring $R$ is $G_{\infty}(m)^{op}$
in the sense of \cite{HI} (cf. Introduction). Notice that the notion of the Auslander condition is left-right symmetric
\cite[Theorem 3.7]{FGR}, so the ring $R$ satisfies the Auslander condition if and only if
both $_RR$ and $R_R$ satisfy the Auslander condition.
However, in general, the notion of $R$ being $G_{\infty}(m)$ is not left-right symmetric when $m\geq 1$
\cite{AR2,HI}. It should be pointed out that modules satisfying Auslander-type conditions are ubiquitous.
For example, if $R$ is a left and right Noetherian ring and $\id_{R^{op}}R\leq m$, then any module
in $\Mod R$ is $G_{\infty}(m)$ \cite[Example 4.2(3)]{H5}. For more examples of modules satisfying Auslander-type conditions,
the reader is referred to \cite[Example 4.2]{H5}.

Let $\mathcal{X}$ be a subcategory of $\Mod R$ and $M\in \Mod R$. The \emph{$\mathcal{X}$-projective dimension}
$\mathcal{X}$-$\pd_RM$ of $M$ is defined as $\inf\{n\mid$ there exists an exact sequence
$$0 \to X_n \to \cdots \to X_1\to X_0 \to M\to 0$$
in $\Mod R$ with all $X_i$ in $\mathcal{X}\}$. If no such an integer exists, then set $\mathcal{X}$-$\pd_RM=\infty$.
For any $s\geq 0$, we write
$$\mathcal{X}^{\leq s}:=\{M\in\Mod R\mid\mathcal{X}\text{-}\pd_RM\leq s\}.$$
When $\mathcal{X}=\mathcal{GP}(\Mod R)$ or $\mathcal{GP}(\mod R)$,
the $\mathcal{X}$-projective dimension of $M$ is exactly the Gorenstein projective dimension $\Gpd_RM$ of $M$.

\section{\bf Syzygy modules and Gorenstein projective dimension}

In this section, $R$ is an arbitrary ring. For any $m\geq 0$, we write
$$\mathcal{G}_{\infty}(m):=\{M\in\Mod R\mid M\ \text{is}\ G_{\infty}(m)\}.$$
Then we have the following inclusion chain
$$\mathcal{G}_{\infty}(0)\subseteq\mathcal{G}_{\infty}(1)\subseteq\cdots\subseteq
\mathcal{G}_{\infty}(m)\subseteq\cdots.$$

\begin{lemma}\label{lem-3.1}
If $R$ is a left Noetherian ring and ${_RR}\in\mathcal{G}_{\infty}(m)$,
then any flat module in $\Mod R$ is in $\mathcal{G}_{\infty}(m)$.
\end{lemma}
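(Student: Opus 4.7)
The plan is to produce, for each flat module $F$, some injective coresolution $0 \to F \to I^0 \to I^1 \to \cdots$ with $\fd_R I^i \le i+m$ for every $i \ge 0$. This will suffice because over any ring, any injective coresolution decomposes as the direct sum of the minimal one and a split-exact complex of injectives, so the terms $E^i(F)$ of the minimal coresolution sit as direct summands of the $I^i$'s and inherit the flat-dimension bound.

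For the case $F = R^{(\Lambda)}$ free, I would simply take the termwise direct sum of the minimal injective coresolution of ${_RR}$. Since $R$ is left Noetherian, direct sums of injective left $R$-modules are again injective (Bass), so $0 \to F \to E^0(R)^{(\Lambda)} \to E^1(R)^{(\Lambda)} \to \cdots$ is an injective coresolution of $F$; because $\Tor$ commutes with direct sums, $\fd_R E^i(R)^{(\Lambda)} = \fd_R E^i(R) \le i+m$, using the hypothesis ${_RR} \in \mathcal{G}_\infty(m)$.

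For a general flat $F$, I would appeal to Lazard's theorem to present $F$ as a filtered direct limit of finitely generated free modules $R^{n_\lambda}$. The minimal injective coresolutions $E^\bullet(R)^{n_\lambda}$ should assemble into a directed system whose termwise direct limit is an injective coresolution of $F$: filtered colimits preserve exactness, and over a left Noetherian ring they preserve injectivity; since $\Tor$ commutes with filtered direct limits, each term in the colimit has flat dimension at most $\sup_\lambda \fd_R E^i(R)^{n_\lambda} \le i+m$, and the preliminary reduction concludes.

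The subtle technical point is that lifting the transition maps between the $R^{n_\lambda}$ to chain maps between their injective coresolutions is only canonical up to chain homotopy, which does not \emph{a priori} give a strict directed system of complexes of the kind required to form a termwise colimit. This can be rectified by a standard inductive construction, successively adjusting each chosen lift by a chain homotopy so that subsequent compositions agree on the nose, using the injectivity of each term of the coresolutions; this step is the only real obstacle, while all remaining ingredients are standard facts about $\Tor$, direct sums, and filtered colimits over a left Noetherian ring.
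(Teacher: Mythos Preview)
The paper's proof consists solely of a citation to \cite[Corollary~3.2]{H5}, so your attempt is necessarily a different, self-contained argument.

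Your reduction step (an arbitrary injective coresolution splits as the minimal one direct sum a contractible complex of injectives, so $E^i(F)$ is a summand of $I^i$) is correct, and your treatment of free modules is correct: over a left Noetherian ring the minimal injective coresolution commutes with arbitrary direct sums, so every free module --- and hence every projective module, by passing to summands --- lies in $\mathcal{G}_\infty(m)$. This already covers all flat modules when $R$ is left perfect (in particular when $R$ is an Artin algebra, the setting of Section~4).

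The genuine gap is the Lazard step for a general left Noetherian ring. You identify the problem honestly: lifts of the transition maps $R^{n_\lambda}\to R^{n_\mu}$ to chain maps $(E^\bullet(R))^{n_\lambda}\to(E^\bullet(R))^{n_\mu}$ exist only up to chain homotopy, so no strict $\Lambda$-diagram of complexes is handed to you. But your proposed fix --- ``a standard inductive construction, successively adjusting each chosen lift by a chain homotopy so that subsequent compositions agree on the nose'' --- is not actually standard and, as stated, does not work. Such an induction makes sense for a finite or well-ordered index set; Lazard's index category, however, is an arbitrary filtered category (equivalently a directed poset), which in general admits no transfinite recursion of this kind: when you reach a new arrow you must satisfy compatibilities with \emph{all} previously chosen composites through intermediate objects, and there is no mechanism guaranteeing a simultaneous homotopy adjustment. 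One can rectify the diagram abstractly --- for instance by taking an injective resolution of the object $(F_\lambda)_\lambda$ in the Grothendieck category $(\Mod R)^{\Lambda}$, whose terms are objectwise injective because each evaluation functor has an exact left adjoint --- but the resulting objectwise resolutions are no longer the specific complexes $(E^\bullet(R))^{n_\lambda}$, and you lose precisely the flat-dimension bound on the terms that the whole argument is designed to propagate. So the proof does not close as written, and closing it requires an idea you have not supplied.
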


\begin{proof}
It follows from \cite[Corollary 3.2]{H5}.
\end{proof}

The following lemma is used frequently in the sequel.

\begin{lemma}\label{lem-3.2}
Let
$$0\to M\to X^0\to X^1\to\cdots\to X^i\to\cdots\eqno{(3.1)}$$
be an exact sequence in $\Mod R$ and let $m\geq 0$. If $X^i\in\mathcal{G}_{\infty}(m)$
for any $i\geq 0$, then $M\in\mathcal{G}_{\infty}(m)$. In particular, the subcategory
$\mathcal{G}_{\infty}(m)$ is closed under kernels of epimorphisms.
\end{lemma}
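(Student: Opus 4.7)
The plan is to construct an injective coresolution $I^\bullet$ of $M$ with $\fd_R I^n\le n+m$ for every $n\ge 0$; since the minimal injective coresolution $E^\bullet(M)$ is a termwise direct summand of any injective coresolution of $M$ (the injective envelope $E^0(M)$ splits off from any injective containing $M$, and the argument iterates), the bound will then transfer, giving $M\in\mathcal{G}_\infty(m)$.

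For each $p\ge 0$ I fix the minimal injective coresolution $X^p\to E^\bullet(X^p)$; by $X^p\in\mathcal{G}_\infty(m)$ one has $\fd_R E^q(X^p)\le q+m$ for all $q\ge 0$. I then apply the Cartan--Eilenberg injective coresolution to the bounded-below complex $X^\bullet=(X^0\to X^1\to\cdots)$ (whose cohomology is $M$ concentrated in degree $0$), producing a first-quadrant double complex $J^{p,q}=E^q(X^p)$ of injective modules whose horizontal differentials are obtained by iteratively lifting the maps $X^p\to X^{p+1}$ through the injectivity of the $E^q(X^{p+1})$ and adjusting, via the standard Cartan--Eilenberg procedure, so that the total differential on $I^n:=\bigoplus_{p+q=n}J^{p,q}$ squares to zero. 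The column spectral sequence of this bicomplex has $E_1$-page equal to $X^\bullet$ in row $q=0$ (because each column is an injective coresolution of $X^p$), which collapses on $E_2$ to $M$ in position $(0,0)$ and zero elsewhere, so $I^\bullet$ is quasi-isomorphic to $M[0]$. Augmented by $M\hookrightarrow X^0\hookrightarrow J^{0,0}\subseteq I^0$, $I^\bullet$ is therefore an injective coresolution of $M$.

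Subadditivity of flat dimension under finite direct sums then yields
\[\fd_R I^n=\sup_{p+q=n}\fd_R E^q(X^p)\le\sup_{p+q=n}(q+m)\le n+m,\]
whence $\fd_R E^n(M)\le n+m$ for all $n$, i.e., $M\in\mathcal{G}_\infty(m)$. The \emph{in particular} clause (closure under kernels of epimorphisms) is the special case obtained by applying the lemma to $0\to M\to X^0\to X^1\to 0\to 0\to\cdots$, since the zero module trivially belongs to $\mathcal{G}_\infty(m)$.

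The main technical hurdle is the Cartan--Eilenberg assembly of the bicomplex: composites of the naive chain lifts of the $X^p\to X^{p+1}$ are only null-homotopic rather than strictly zero, so one must invoke the standard sign-and-homotopy adjustment to obtain a genuine bicomplex with columns equal to the minimal injective coresolutions $E^\bullet(X^p)$. Once this is arranged the flat-dimension bound propagates immediately via the elementary inequality $q+m\le(p+q)+m$ for $p,q\ge 0$.
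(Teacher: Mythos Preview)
Your proof and the paper's arrive at the same injective coresolution of $M$, with $n$-th term $\bigoplus_{p+q=n}E^q(X^p)$, and both finish by passing to the minimal coresolution. The paper, however, constructs this sequence by an elementary iterative dual-horseshoe (citing \cite[Corollary 3.9(1)]{H3}): setting $M^i:=\Im(X^{i-1}\to X^i)$, one starts from $0\to M^n\hookrightarrow E^0(X^n)$ and, working downward, splices each short exact sequence $0\to M^{k-1}\to X^{k-1}\to M^k\to 0$ against the minimal coresolution of $X^{k-1}$ to extend the partial coresolution of $M^k$ to one of $M^{k-1}$. After $n$ steps one has $0\to M\to E^0(X^0)\to E^1(X^0)\oplus E^0(X^1)\to\cdots\to\bigoplus_{i=0}^nE^{n-i}(X^i)$; as $n$ is arbitrary, the flat-dimension bound follows. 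No global bicomplex is ever assembled, so the difficulty you flag---that composites of lifted horizontal maps are only null-homotopic rather than zero---simply does not arise: each step is a single horseshoe involving one short exact sequence.

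Your Cartan--Eilenberg framing is correct in the end, but as you note, the standard CE construction does not put the \emph{minimal} coresolutions $E^\bullet(X^p)$ in the columns; forcing this requires a genuine twisted complex with coherent higher homotopies (the obstructions vanish because $\Ext^{<0}$ between modules is zero, but this must be argued and is not what one usually calls ``the standard Cartan--Eilenberg procedure''). The paper's inductive staircase buys the same conclusion without that machinery.
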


\begin{proof}
By the exact sequence (3.1) and \cite[Corollary 3.9(1)]{H3}, we get the following exact sequence
$$0\to M\to E^0(X^0)\to E^1(X^0)\oplus E^0(X^1)\to\cdots\to\oplus_{i=0}^nE^{n-i}(X^i)\to\cdots.$$
For the reader's convenience, we give an outline of the construction of this exact sequence,
which is dual to that in the proof of \cite[Theorem 3.6]{H3}.
Put $M^i:=\Im(X^{i-1}\to X^i)$ for any $i\geq 1$. Let $n$ be an arbitrary positive integer.
We have an exact sequence $$0\to M^n\to E^0(X^n).\eqno{(3.2)}$$  From (3.2) and the exact sequence
$0\to X^{n-1}\to E^0(X^{n-1})\to E^1(X^{n-1})$, we obtain the following exact sequence
$$0\to M^{n-1}\to  E^0(X^{n-1})\to E^1(X^{n-1})\oplus E^0(X^n).\eqno{(3.3)}$$
Then from (3.3) and the exact sequence $0\to X^{n-2}\to E^0(X^{n-2})\to E^1(X^{n-2})\to E^2(X^{n-2})$,
we obtain the following exact sequence
$$0\to M^{n-2}\to  E^0(X^{n-2})\to E^1(X^{n-2})\oplus E^0(X^{n-1})\to E^2(X^{n-2})\oplus E^1(X^{n-1})\oplus E^0(X^{n}).$$
Continuing this process, we obtain the following exact sequence
$$0\to M\to E^0(X^0)\to E^1(X^0)\oplus E^0(X^1)\to\cdots\to\oplus_{i=0}^nE^{n-i}(X^i).$$
Then the desired exact sequence is obtained because of the arbitrariness of $n$.
Since $X^i\in G_{\infty}(m)$, we have $\fd_RE^{j}(X^i)\leq j+m$ for any $i,j\geq 0$.
So $\fd_R\oplus_{i=0}^nE^{n-i}(X^i)\leq n+m$ for any $n\geq 0$, and thus
$M\in G_{\infty}(m)$.
\end{proof}

For any $n\geq 1$, we write $\Omega^{n}_{\mathcal{F}}(\Mod R):=\{M\in\Mod R\mid$
there exists an exact sequence
$$0\to M\to F^0\to F^1\to\cdots\to F^{n-1}$$
in $\Mod R$ with all $F^i$ flat$\}$, and write
$\Omega^{\infty}_{\mathcal{F}}(\Mod R):=\cap_{n\geq 1}\Omega^{n}_{\mathcal{F}}(\Mod R)$.

The first assertion in the following result shows that any module in $\mathcal{G}_{\infty}(m)$
is isomorphic to a kernel (resp. a cokernel) of a homomorphism from a module with finite flat dimension
to certain syzygy module.

\begin{theorem}\label{thm-3.3}
It holds that
\begin{enumerate}
\item[$(1)$] Let $M\in\mathcal{G}_{\infty}(m)$ with $m\geq 0$. Then for any $n\geq 1$, there exists an exact sequence
$$0\to G_0\to X_0\to G_1\to X_1\to 0$$
in $\Mod R$ with $M\cong\Im(X_0\to G_1)$ such that the following conditions are satisfied.
\begin{enumerate}
\item[$(a)$] $\fd_RG_0\leq m-1$ and $\fd_RG_1\leq m$.
\item[$(b)$] $X_0\in\Omega_{\mathcal{F}}^{n}(\Mod R)$ and $X_1\in\Omega^{n-1}(\Mod R)$.
\end{enumerate}
\item[$(2)$] $\mathcal{G}_{\infty}(0)\subseteq\Omega^{\infty}_{\mathcal{F}}(\Mod R)$ with equality
if $R$ is a left Noetherian ring and ${_RR}\in\mathcal{G}_{\infty}(0)$.
\end{enumerate}
\end{theorem}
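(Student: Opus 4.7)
My approach constructs the 4-term exact sequence of part (1) by iterated pullback using the minimal injective coresolution of $M$ together with flat resolutions of its terms, and then deduces part (2) from the $m=0$ case of part (1) together with Lemmas~\ref{lem-3.1} and \ref{lem-3.2}. First I would set up the data: take the minimal injective coresolution $0\to M\to E^0(M)\to E^1(M)\to\cdots$, and use the hypothesis $\fd_R E^i(M)\le i+m$ to fix, for each $i$, a flat resolution $0\to F^i_{i+m}\to\cdots\to F^i_0\to E^i(M)\to 0$.

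For the left half $0\to G_0\to X_0\to M\to 0$, the base case $n=1$ is the pullback of $M\hookrightarrow E^0(M)$ along $F^0_0\twoheadrightarrow E^0(M)$: setting $X_0:=F^0_0\times_{E^0(M)}M$ yields $X_0\hookrightarrow F^0_0$ (so $X_0\in\Omega^1_{\mathcal F}(\Mod R)$), and $G_0:=\Im(F^0_1\to F^0_0)$ inherits a flat resolution of length $\le m-1$ from $F^0_m\to\cdots\to F^0_1$. The inductive step extends the flat coresolution of $X_0$ by a further pullback against the cosyzygy $\Omega^{-n}(M)\hookrightarrow E^n(M)$ and its flat resolution, preserving $\fd_R G_0\le m-1$. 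For the right half $0\to M\to G_1\to X_1\to 0$, the case $n=1$ is trivial: take $G_1=E^0(M)$ and $X_1=\Omega^{-1}(M)\in\Omega^0(\Mod R)=\Mod R$. For $n\ge 2$, starting from the truncated coresolution $0\to M\to E^0(M)\to\cdots\to E^{n-2}(M)\to\Omega^{-(n-1)}(M)\to 0$, I would iteratively pull back with projective covers of the successive cosyzygies $\Omega^{-j}(M)$, collapsing this long sequence into one short exact sequence in which $X_1$ becomes an $(n-1)$-syzygy assembled from the projective covers while $G_1$ remains of flat dimension $\le m$ by virtue of the bounds $\fd_R E^i(M)\le i+m$.

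For part (2), the inclusion $\mathcal{G}_{\infty}(0)\subseteq\Omega^{\infty}_{\mathcal F}(\Mod R)$ is immediate from part (1) applied with $m=0$: the bound $\fd_R G_0\le -1$ forces $G_0=0$, so $X_0\cong M\in\Omega^n_{\mathcal F}$ for every $n\ge 1$. For the reverse inclusion under the hypothesis that $R$ is left Noetherian and ${}_RR\in\mathcal{G}_{\infty}(0)$, Lemma~\ref{lem-3.1} places every flat module in $\mathcal{G}_{\infty}(0)$. Given $M\in\Omega^{\infty}_{\mathcal F}$, I would construct an infinite flat coresolution of $M$ by choosing an embedding $M\hookrightarrow F^0$ into a flat module and using a diagonal-pushout argument with the given finite flat coresolutions of $M$ to show that the cokernel $F^0/M$ again lies in $\Omega^{\infty}_{\mathcal F}$, then iterating; applying Lemma~\ref{lem-3.2} to the resulting infinite flat coresolution yields $M\in\mathcal{G}_{\infty}(0)$. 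The main obstacle will be the bookkeeping in part (1), namely simultaneously maintaining $X_0\in\Omega^n_{\mathcal F}$ with $\fd_R G_0\le m-1$ through the induction and collapsing the cosyzygy sequence to produce an $(n-1)$-syzygy $X_1$ while keeping $\fd_R G_1\le m$. In part (2) the delicate point is that the quotient of a flat by a flat submodule only has flat dimension $\le 1$, not $0$, so establishing $F^0/M\in\Omega^{\infty}_{\mathcal F}$ requires a secondary pullback with flat resolutions of these quotients.
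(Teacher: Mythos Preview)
Your right-half construction of $(G_1,X_1)$ via projective covers and pullbacks matches the paper's argument. The gap is in your left half. Iterated pullback against \emph{flat} resolutions of the $E^i(M)$ cannot produce a flat coresolution of a fixed $X_0$: after the first step you have $0\to X_0\to F^0_0\to\Omega^{-1}(M)\to 0$, and to continue you would need a map $F^0_0\to F^1_0$ lifting $F^0_0\twoheadrightarrow\Omega^{-1}(M)\hookrightarrow E^1(M)$ through $F^1_0\twoheadrightarrow E^1(M)$. Such a lift exists when $F^0_0$ is projective, not merely flat, and even then the kernel of $F^0_0\to F^1_0$ need not equal $X_0$. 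So the induction as you describe it does not go through. If you replace the flat resolutions of the $E^i(M)$ by projective ones and run the horseshoe lemma along the truncated coresolution $0\to M\to E^0(M)\to\cdots\to E^{n-1}(M)\to\Omega^{-n}(M)\to 0$, you do get a valid pair $(X_0,G_0)$ with $X_0\in\Omega^n(\Mod R)\subseteq\Omega^n_{\mathcal F}(\Mod R)$ and $\fd_RG_0\le m-1$; this is essentially the paper's first large diagram. The paper itself takes a cleaner route for the left half: having built $0\to M\to G_1\to X_1\to 0$ with $\fd_RG_1\le m$ and $X_1\in\Omega^{n-1}(\Mod R)$, it simply chooses a flat epimorphism $F\twoheadrightarrow G_1$ with kernel $G_0$ (so $\fd_RG_0\le m-1$) and forms the pullback $X_0$; the middle row $0\to X_0\to F\to X_1\to 0$ then places $X_0\in\Omega^n_{\mathcal F}(\Mod R)$ in one stroke, since $F$ is flat and $X_1$ already sits at the start of a length-$(n-1)$ projective coresolution. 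The point is that the two halves are not independent: the left half is manufactured directly from the right.

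For the reverse inclusion in part~(2), your plan to produce an \emph{infinite} flat coresolution by showing $F^0/M\in\Omega^\infty_{\mathcal F}(\Mod R)$ is more than is needed, and the step you flag as delicate is genuinely so (there is no Schanuel-type lemma for flat coresolutions). The paper bypasses this entirely: for each fixed $n$, a length-$n$ flat coresolution $0\to M\to F^0\to\cdots\to F^{n-1}$, with each $F^i\in\mathcal G_\infty(0)$ by Lemma~\ref{lem-3.1}, feeds into the construction of Lemma~\ref{lem-3.2} to produce a partial injective coresolution of $M$ whose $j$-th term has flat dimension $\le j$ for $0\le j\le n-1$; since the minimal injective coresolution is a direct summand of any injective coresolution and $n$ is arbitrary, this already gives $M\in\mathcal G_\infty(0)$.
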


\begin{proof}
(1) 
Let $M\in\mathcal{G}_{\infty}(m)$ and $n\geq 1$. We have the following two exact and commutative diagrams:
$$\xymatrix{
& 0 \ar@{-->}[d] & 0 \ar@{-->}[d] & 0 \ar@{-->}[d] & & 0 \ar@{-->}[d] &  & \\
0 \ar@{-->}[r]& K \ar@{-->}[r] \ar@{-->}[d] & K_0 \ar@{-->}[r] \ar@{-->}[d] & K_1 \ar@{-->}[r] \ar@{-->}[d]
& \cdots \ar@{-->}[r]& K_{n-1} \ar@{-->}[r] \ar@{-->}[d] & 0 & \\
0 \ar@{-->}[r]& X \ar@{-->}[r] \ar@{-->}[d] & P_{0} \ar@{-->}[r] \ar@{-->}[d] & P_{1} \ar@{-->}[r] \ar@{-->}[d]
& \cdots \ar@{-->}[r]& P_{n-1} \ar@{-->}[r] \ar@{-->}[d] & \Omega^{-n}(M) \ar@{-->}[r] \ar@{==}[d] & 0 \\
0 \ar[r]& M \ar[r]\ar@{-->}[d]  & E^0(M) \ar[r]\ar@{-->}[d]  & E^1(M) \ar[r]\ar@{-->}[d]  & \cdots \ar[r]
& E^{n-1}(M) \ar[r]\ar@{-->}[d]  & \Omega^{-n}(M) \ar[r] & 0\\
& 0 & 0 & 0 & & 0 &  & }$$
and
$$\xymatrix{
& 0 \ar@{-->}[d] & 0 \ar@{-->}[d] & 0 \ar@{-->}[d]  & \\
0 \ar@{-->}[r]& K \ar@{-->}[r] \ar@{-->}[d] & K_0 \ar@{-->}[r] \ar@{-->}[d] & K'_1 \ar@{-->}[r] \ar@{-->}[d] & 0 \\
0 \ar@{-->}[r]& X \ar@{-->}[r] \ar@{-->}[d] & P_{0} \ar@{-->}[r] \ar@{-->}[d] & X_1 \ar@{-->}[r] \ar@{-->}[d] & 0\\
0 \ar[r]& M \ar[r]\ar@{-->}[d]  & E^0(M) \ar[r]\ar@{-->}[d]  & \Omega^{-1}(M) \ar[r]\ar@{-->}[d]  & 0\\
& 0 & 0 & 0  & }$$
with all $P_i$ projective in $\Mod R$ and $X_1:=\Im(P_0\to P_1)\in\Omega^{n-1}(\Mod R)$. Since $M\in\mathcal{G}_{\infty}(m)$,
we have $\fd_RE^i(M)\leq i+m$ for any $i\geq 0$, and thus $\fd_RK_i\leq i+m-1$ for any $1\leq i\leq n-1$. It follows from
the upper row in the first diagram that $\fd_RK'_1\leq m$.

Consider the following pull-back diagram (Diagram (3.1)):
$$\xymatrix{& & 0 \ar@{-->}[d] & 0 \ar[d]& &\\
& & K'_1 \ar@{==}[r] \ar@{-->}[d] & K'_1 \ar[d]& &\\
0 \ar@{-->}[r] & M \ar@{==}[d] \ar@{-->}[r] & G_1 \ar@{-->}[d] \ar@{-->}[r] & X_1 \ar[d] \ar@{-->}[r] & 0\\
0 \ar[r] & M \ar[r] & E^{0}(M) \ar[r] \ar@{-->}[d] & \Omega^{-1}(M) \ar[d] \ar[r] & 0 &\\
& & 0 & 0. & & }$$
From the middle column, we obtain $\fd_RG_1\leq m$, and there exists an exact sequence
$$0\to G_0\to F\to G_1\to 0$$
in $\Mod R$ with $F$ flat and $\fd_RG_0\leq m-1$. Consider the following pull-back diagram (Diagram (3.2)):
$$\xymatrix{ &   0 \ar@{-->}[d] & 0 \ar[d]& & \\
&  G_0 \ar@{==}[r] \ar@{-->}[d]& G_0   \ar[d]& \\
0 \ar@{-->}[r] &  X_0 \ar@{-->}[r] \ar@{-->}[d]& F \ar@{-->}[r] \ar[d]& X_1 \ar@{-->}[r] \ar@{==}[d] & 0 \\
0 \ar[r] &  M \ar[r] \ar@{-->}[d]& G_1 \ar[r] \ar[d]& X_1 \ar[r]  & 0 \\
& 0 & 0. & }$$
From the middle row, we obtain $X_0\in\Omega_{\mathcal{F}}^{n}(\Mod R)$.
Now splicing the middle row in Diagram (3.1) and the leftmost column in Diagram (3.2) we get the desired exact sequence.

(2) To prove $\mathcal{G}_{\infty}(0)\subseteq\Omega^{\infty}_{\mathcal{F}}(\Mod R)$,
it suffices to prove that if $M\in\mathcal{G}_{\infty}(0)$, then $M\in\Omega^{n}_{\mathcal{F}}(\Mod R)$ for any $n\geq 1$.
Let $M\in\mathcal{G}_{\infty}(0)$ and $n\geq 1$. By (1), there exists an exact sequence
$$0\to M\to F\to G_1\to 0$$
in $\Mod R$ with $F$ flat and $G_1\in\Omega^{n-1}(\Mod R)$, and so $M\in\Omega^n_{\mathcal{F}}(\Mod R)$.

Now assume that $R$ is a left Noetherian ring and ${_RR}\in\mathcal{G}_{\infty}(0)$.
Then any flat module in $\Mod R$ is in $\mathcal{G}_{\infty}(0)$ by Lemma \ref{lem-3.1},
and thus $\Omega^{\infty}_{\mathcal{F}}(\Mod R)\subseteq\mathcal{G}_{\infty}(0)$ by Lemma \ref{lem-3.2}.
\end{proof}

We need the following lemma.

\begin{lemma}\label{lem-3.4}
For any $m,s\geq 0$, we have
$$\mathcal{G}_{\infty}(m)^{\leq s}\subseteq\mathcal{G}_{\infty}(m+s)$$
with equality if $\mathcal{P}(\Mod R)\subseteq\mathcal{G}_{\infty}(0)$.
\end{lemma}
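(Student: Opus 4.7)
The plan is to prove the two inclusions separately by induction on $s$, using pushouts with injective envelopes on one side and the injective envelope of a projective cover on the other.

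For the forward inclusion $\mathcal{G}_{\infty}(m)^{\leq s}\subseteq\mathcal{G}_{\infty}(m+s)$, I first record two stability properties of $\mathcal{G}_{\infty}(m)$ that follow from the dual horseshoe lemma together with the fact that a minimal injective coresolution is a direct summand of any injective coresolution: namely, $\mathcal{G}_{\infty}(m)$ is closed under direct summands and under extensions. I also note that if $A\in\mathcal{G}_{\infty}(m_{1})$, then $\Omega^{-1}(A)\in\mathcal{G}_{\infty}(m_{1}+1)$, since its minimal injective coresolution is (up to summands) the shift by one of that of $A$. From these I extract an auxiliary claim: for a short exact sequence $0\to A\to B\to C\to 0$ with $A\in\mathcal{G}_{\infty}(m_{1})$ and $B\in\mathcal{G}_{\infty}(m_{2})$, one has $C\in\mathcal{G}_{\infty}(\max(m_{2},m_{1}+1))$. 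Its proof is to take the pushout of $A\hookrightarrow B$ along $A\hookrightarrow E^{0}(A)$, giving sequences $0\to B\to D\to\Omega^{-1}(A)\to 0$ and $0\to E^{0}(A)\to D\to C\to 0$; the second splits because $E^{0}(A)$ is injective, and extension-closure applied to the first bounds $D$, hence also the summand $C$. Splitting a resolution $0\to X_{s}\to\cdots\to X_{0}\to M\to 0$ with $X_{i}\in\mathcal{G}_{\infty}(m)$ into short exact sequences and applying this claim inductively then gives $M\in\mathcal{G}_{\infty}(m+s)$.

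For the reverse inclusion, assume $\mathcal{P}(\Mod R)\subseteq\mathcal{G}_{\infty}(0)$. I will reduce to a first-syzygy shift: if $M\in\mathcal{G}_{\infty}(k)$ with $k\geq 1$, then $\Omega(M)\in\mathcal{G}_{\infty}(k-1)$. Take an exact sequence $0\to\Omega(M)\to P\to M\to 0$ with $P$ projective, so $P\in\mathcal{G}_{\infty}(0)$ and $E^{0}(P)$ is simultaneously injective and flat. Composing $\Omega(M)\hookrightarrow P\hookrightarrow E^{0}(P)$ produces $0\to\Omega(M)\to E^{0}(P)\to E^{0}(P)/\Omega(M)\to 0$, while the quotient sits in $0\to M\to E^{0}(P)/\Omega(M)\to\Omega^{-1}(P)\to 0$. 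Since $\Omega^{-1}(P)\in\mathcal{G}_{\infty}(1)\subseteq\mathcal{G}_{\infty}(k)$ and $M\in\mathcal{G}_{\infty}(k)$, extension-closure gives $E^{0}(P)/\Omega(M)\in\mathcal{G}_{\infty}(k)$. Splitting the injective envelope $E^{0}(\Omega(M))$ off $E^{0}(P)$ exhibits $\Omega^{-1}(\Omega(M))$ as a direct summand of $E^{0}(P)/\Omega(M)$, hence $\Omega^{-1}(\Omega(M))\in\mathcal{G}_{\infty}(k)$. Since the minimal injective coresolution of $\Omega(M)$ in degrees $\geq 1$ coincides with that of $\Omega^{-1}(\Omega(M))$, the bounds $\fd_{R}E^{0}(\Omega(M))\leq\fd_{R}E^{0}(P)\leq 0\leq k-1$ and $\fd_{R}E^{i}(\Omega(M))=\fd_{R}E^{i-1}(\Omega^{-1}(\Omega(M)))\leq(i-1)+k$ for $i\geq 1$ together yield $\Omega(M)\in\mathcal{G}_{\infty}(k-1)$.

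Iterating the syzygy shift $s$ times gives $\Omega^{s}(M)\in\mathcal{G}_{\infty}(m)$ whenever $M\in\mathcal{G}_{\infty}(m+s)$, and splicing with the first $s$ terms of a projective resolution (whose terms lie in $\mathcal{P}(\Mod R)\subseteq\mathcal{G}_{\infty}(0)\subseteq\mathcal{G}_{\infty}(m)$) provides a $\mathcal{G}_{\infty}(m)$-resolution of $M$ of length at most $s$, proving the reverse inclusion. The main obstacle is the syzygy-shift step: the drop by exactly one in the Auslander-type parameter depends crucially on $E^{0}(P)$ being flat, so that $E^{0}(\Omega(M))$ contributes flat dimension $0$ rather than the generic bound $k$. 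This is the unique point where the hypothesis $\mathcal{P}(\Mod R)\subseteq\mathcal{G}_{\infty}(0)$ enters.
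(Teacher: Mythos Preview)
Your proof is correct, and both halves take a different route from the paper's.

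For the forward inclusion, the paper does not induct on $s$ but instead invokes a construction from \cite[Corollary~3.5]{H3}: given the full resolution $0\to X_s\to\cdots\to X_0\to M\to 0$, it produces in one stroke an explicit injective coresolution of $M$ whose $j$-th term is a direct summand of $\bigoplus_{i=0}^{s}E^{i+j}(X_i)$, and then reads off the flat-dimension bound. Your pushout argument achieves the same bound step by step via the auxiliary claim $C\in\mathcal{G}_{\infty}(\max(m_2,m_1+1))$, which is more elementary and entirely self-contained; the cost is an extra induction, the gain is that you avoid the black-box citation.

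For the reverse inclusion, the paper's skeleton is the same as yours---induction on $s$ and a syzygy shift---but for the key step ``$P\in\mathcal{G}_{\infty}(0)$ and $M\in\mathcal{G}_{\infty}(m+s)$ force $K\in\mathcal{G}_{\infty}(m+s-1)$'' it simply cites \cite[Proposition~4.12]{H5}. You instead prove this shift directly, by embedding $\Omega(M)$ into the flat injective $E^0(P)$, identifying $\Omega^{-1}(\Omega(M))$ as a summand of $E^0(P)/\Omega(M)$ via $E^0(P)\cong E^0(\Omega(M))\oplus I$, and then reading off the flat dimensions of $E^i(\Omega(M))$ in the two ranges $i=0$ and $i\geq 1$ separately. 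This is a clean direct argument that makes transparent exactly where the hypothesis $\mathcal{P}(\Mod R)\subseteq\mathcal{G}_{\infty}(0)$ is consumed (flatness of $E^0(P)$), whereas the paper hides this inside the cited proposition.
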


\begin{proof}
Let $M\in\mathcal{G}_{\infty}(m)^{\leq s}$ and
$$0\to X_s\to\cdots\to X_1\to X_0\to M\to 0$$
be an exact sequence in $\Mod R$ with all $X_i$ in $\mathcal{G}_{\infty}(m)$.
According to \cite[Corollary 3.5]{H3}, we get the following two exact sequences
$$0\to M\to E\to\oplus_{i=0}^sE^{i+1}(X_i)\to\oplus_{i=0}^sE^{i+2}(X_i)\to
\oplus_{i=0}^sE^{i+3}(X_i)\to\cdots,\eqno{(3.2)}$$
$$0\to E^s(X_0)\to E^{s-1}(X_0)\oplus E^s(X_1)\to\cdots\to\oplus_{i=1}^sE^{i-1}(X_i)
\to\oplus_{i=0}^sE^{i}(X_i)\to E\to 0.\eqno{(3.3)}$$
Since all $X_i$ are in $\mathcal{G}_{\infty}(m)$, we have $\fd_RE^j(X_i)\leq j+m$
for any $j\geq 0$ and $0\leq i\leq s$. Thus $\fd_R\oplus_{i=0}^sE^{i+j}(X_i)\leq j+m+s$
for any $j\geq 1$. By (3.3), we have that $E$ is a direct summand of $\oplus_{i=0}^sE^{i}(X_i)$
and $\fd_RE\leq m+s$. Therefore we obtain $M\in\mathcal{G}_{\infty}(m+s)$ by (3.2).

Now suppose $\mathcal{P}(\Mod R)\subseteq\mathcal{G}_{\infty}(0)$. We will prove
$\mathcal{G}_{\infty}(m+s)\subseteq\mathcal{G}_{\infty}(m)^{\leq s}$ by induction on $s$.
The case for $s=0$ follows trivially. Suppose $s\geq 1$ and $M\in\mathcal{G}_{\infty}(m+s)$. Let
$$0\to K\to P \to M \to 0$$
be an exact sequence in $\Mod R$ with $P$ projective. Since $P\in\mathcal{G}_{\infty}(0)$,
it follows from \cite[Proposition 4.12]{H5} that $K\in\mathcal{G}_{\infty}(m+s-1)$, and hence
$\mathcal{G}_{\infty}(m)$-$\pd K\leq s-1$ by the induction hypothesis. This implies
$\mathcal{G}_{\infty}(m)$-$\pd_RM\leq s$ and $M\in\mathcal{G}_{\infty}(m)^{\leq s}$.
\end{proof}

By Lemma \ref{lem-3.4}, we obtain the following result.

\begin{proposition}\label{prop-3.5}
If $\mathcal{P}(\Mod R)\subseteq\mathcal{G}_{\infty}(0)$, then it holds that
\begin{enumerate}
\item[$(1)$] $\mathcal{GP}(\Mod R)=\mathcal{G}_{\infty}(m)$ if and only if
$\mathcal{GP}(\Mod R)^{\leq s}=\mathcal{G}_{\infty}(m+s)$ for any $s\geq 0$.
\item[$(2)$] If $R$ is a left and right Noetherian ring, then
$\mathcal{GP}(\mod R)=\mathcal{G}_{\infty}(m)\cap\mod R$ if and only if
$\mathcal{GP}(\mod R)^{\leq s}=\mathcal{G}_{\infty}(m+s)\cap\mod R$ for any $s\geq 0$.
\end{enumerate}
\end{proposition}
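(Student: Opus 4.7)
The plan is to use Lemma \ref{lem-3.4} as a black box, which under the standing hypothesis $\mathcal{P}(\Mod R)\subseteq\mathcal{G}_{\infty}(0)$ gives the equality $\mathcal{G}_{\infty}(m)^{\leq s}=\mathcal{G}_{\infty}(m+s)$ for all $m,s\geq 0$. Once this is in hand, both statements of the proposition follow almost formally.

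For part (1), the $(\Rightarrow)$ direction is immediate: assuming $\mathcal{GP}(\Mod R)=\mathcal{G}_{\infty}(m)$, the definition of the $\mathcal{X}$-projective dimension gives
$$\mathcal{GP}(\Mod R)^{\leq s}=\mathcal{G}_{\infty}(m)^{\leq s},$$
and by Lemma \ref{lem-3.4} this equals $\mathcal{G}_{\infty}(m+s)$. For $(\Leftarrow)$, it suffices to specialize the hypothesis to $s=0$, noting that $\mathcal{GP}(\Mod R)^{\leq 0}=\mathcal{GP}(\Mod R)$.

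For part (2), I would first establish the finitely generated analogue of Lemma \ref{lem-3.4}: over a left and right Noetherian ring, $(\mathcal{G}_{\infty}(m)\cap\mod R)^{\leq s}=\mathcal{G}_{\infty}(m+s)\cap\mod R$ provided $\mathcal{P}(\Mod R)\subseteq\mathcal{G}_{\infty}(0)$. The inclusion $\subseteq$ is obtained from the original Lemma \ref{lem-3.4} (a finitely generated resolution by modules in $\mathcal{G}_{\infty}(m)\cap\mod R$ is in particular a resolution by modules in $\mathcal{G}_{\infty}(m)$). For the reverse inclusion, I would mimic the inductive argument in Lemma \ref{lem-3.4}: given $M\in\mathcal{G}_{\infty}(m+s)\cap\mod R$ with $s\geq 1$, take an exact sequence $0\to K\to P\to M\to 0$ with $P$ a finitely generated projective, so that $K\in\mod R$ by left Noetherianness; then $K\in\mathcal{G}_{\infty}(m+s-1)\cap\mod R$ by \cite[Proposition 4.12]{H5}, and the induction hypothesis yields $(\mathcal{G}_{\infty}(m)\cap\mod R)$-$\pd K\leq s-1$, hence $(\mathcal{G}_{\infty}(m)\cap\mod R)$-$\pd M\leq s$. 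Given this finitely generated version of Lemma \ref{lem-3.4}, the proof of part (2) proceeds exactly as in part (1).

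The argument is essentially a bookkeeping exercise once Lemma \ref{lem-3.4} is available; the only nontrivial point is checking that the inductive step in the finitely generated case stays inside $\mod R$, which is guaranteed by the left Noetherian hypothesis together with \cite[Proposition 4.12]{H5}, so I do not anticipate any genuine obstacle.
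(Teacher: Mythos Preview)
Your proposal is correct and follows exactly the approach the paper intends: the paper simply records Proposition~\ref{prop-3.5} as a consequence of Lemma~\ref{lem-3.4}, and your argument is the natural unpacking of that citation, including the finitely generated variant needed for part~(2). The only detail worth making explicit in the inductive step for~(2) is that the finitely generated projective $P$ lies in $\mathcal{G}_{\infty}(0)\cap\mod R\subseteq\mathcal{G}_{\infty}(m)\cap\mod R$, but this is immediate from the standing hypothesis.
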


About the condition $\mathcal{P}(\Mod R)\subseteq\mathcal{G}_{\infty}(0)$ in Proposition \ref{prop-3.5},
we remark that if $R$ is a left Noetherian ring, then this condition is satisfied if and only if
$_RR$ satisfied the Auslander condition by \cite[Theorem 4.9]{H5}, and that if $R$ is an Artin algebra,
then $\mathcal{P}(\Mod R)=\mathcal{G}_{\infty}(0)$ if and only if $R$ is Auslander-regular (that is, the algebra $R$
satisfies Auslander condition and the global dimension of $R$ is finite) \cite[Theorem 5.9]{H5}.

\begin{theorem}\label{thm-3.6} It holds that
\begin{enumerate}
\item[$(1)$] If $R$ is a Gorenstein ring, then $\mathcal{G}_{\infty}(m)\subseteq\mathcal{GP}(\Mod R)^{\leq m}$ for any $m\geq 0$.
\item[$(2)$] If $R$ is a left Noetherian ring and $\id_RR<\infty$, then $\mathcal{GP}(\Mod R)=\Omega^{\infty}(\Mod R)$.
\end{enumerate}
\end{theorem}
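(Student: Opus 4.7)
The plan is to prove (2) first, since it is self-contained, and then to use it inside the argument for (1), which I handle by induction on $m$.

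For (2), the inclusion $\mathcal{GP}(\Mod R)\subseteq\Omega^{\infty}(\Mod R)$ is immediate from unfolding a complete projective resolution. For the converse, I fix $M\in\Omega^{\infty}(\Mod R)$ and, for each $n\geq 1$, write $M\cong\Omega^{n}N_{n}$ in a projective resolution $0\to M\to Q^{(n)}_{n-1}\to\cdots\to Q^{(n)}_{0}\to N_{n}\to 0$. My first step is to verify $\Ext^{i}_{R}(M,Q)=0$ for every $i\geq 1$ and every projective $Q$: dimension shifting gives $\Ext^{i}_{R}(M,Q)\cong\Ext^{i+n}_{R}(N_{n},Q)$ for $i\geq 1$, and since $R$ is left Noetherian direct sums of injective left $R$-modules remain injective, so $\id_{R}Q\leq d:=\id_{R}R$ for every projective $Q$ (as a summand of a free module); choosing $n>d$ annihilates the right-hand side. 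Next, starting from $0\to M\to P^{0}\to M_{1}\to 0$ obtained from $M\in\Omega^{1}(\Mod R)$, Schanuel's lemma applied to $M\hookrightarrow P^{0}$ and $M\hookrightarrow Q^{(n)}_{n-1}$ gives $M_{1}\oplus Q^{(n)}_{n-1}\cong\Omega^{n-1}(N_{n})\oplus P^{0}$, so $M_{1}\in\Omega^{n-1}(\Mod R)$ for every $n\geq 1$ and therefore $M_{1}\in\Omega^{\infty}(\Mod R)$. Iterating produces a right projective coresolution $0\to M\to P^{0}\to P^{1}\to\cdots$ with every cokernel again in $\Omega^{\infty}(\Mod R)$; splicing with any projective resolution of $M$ and invoking the $\Ext$-vanishing above for every (co)syzygy shows the resulting doubly-infinite complex of projectives is $\Hom_{R}(-,Q)$-exact for every projective $Q$, so $M\in\mathcal{GP}(\Mod R)$.

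For (1), I induct on $m$. In the base case $m=0$, Theorem~\ref{thm-3.3}(2) gives $M\in\Omega^{\infty}_{\mathcal{F}}(\Mod R)$, and I then need to upgrade this to $M\in\mathcal{GP}(\Mod R)$. Over a Gorenstein ring every flat left $R$-module has projective dimension at most $d=\id_{R}R$, so in a flat coresolution $0\to M\to F^{0}\to F^{1}\to\cdots\to F^{n-1}$ I replace each $F^{i}$ by a projective resolution of length at most $d$ and form the associated bicomplex; the total complex is an acyclic complex of projectives, bounded below at $-d$, with unique cohomology $M$ in degree $0$. From this I extract a short exact sequence $0\to L\to K\to M\to 0$ with $L$ of finite projective dimension and $K$ an infinite syzygy, and conclude $M\in\mathcal{GP}(\Mod R)$ via part (2). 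For the inductive step $m\geq 1$, I apply Theorem~\ref{thm-3.3}(1) with $n>d+1$ to obtain $0\to M\to G_{1}\to X_{1}\to 0$ with $\fd_{R}G_{1}\leq m$ and $X_{1}\in\Omega^{n-1}(\Mod R)$; by part (2) and $n-1>d$, one has $X_{1}\in\mathcal{GP}(\Mod R)$. The pullback description of $G_{1}$ from the proof of Theorem~\ref{thm-3.3}(1) places it in $0\to K'_{1}\to G_{1}\to E^{0}(M)\to 0$ with $\fd_{R}K'_{1}\leq m$ and $\fd_{R}E^{0}(M)\leq m$; combining this structural information with the inductive hypothesis applied to the relevant syzygies yields $\Gpd_{R}G_{1}\leq m$, and the long exact sequence of $0\to M\to G_{1}\to X_{1}\to 0$ then gives $\Gpd_{R}M\leq\max\{\Gpd_{R}G_{1},\,\Gpd_{R}X_{1}-1\}\leq m$.

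The hard part will be the base case of (1): promoting $M\in\Omega^{\infty}_{\mathcal{F}}(\Mod R)$ to $M\in\mathcal{GP}(\Mod R)$ over a Gorenstein ring. The bicomplex/total-complex construction is the natural tool, but I will need to split the total complex into an infinite-syzygy part and a finite-projective-dimension part and verify that the latter does not obstruct invoking part (2). A closely related delicacy in the inductive step is sharpening the bound $\Gpd_{R}G_{1}\leq m$ from the naive estimate $m+d$ (which one gets from $\fd_{R}G_{1}\leq m$ together with Jensen's theorem); this requires using the concrete pullback structure of $G_{1}$ and the Gorenstein hypothesis, rather than only the flat-dimension bound on $G_{1}$.
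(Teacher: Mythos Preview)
Your argument for (2) contains a genuine error in the ``Schanuel'' step. Given two short exact sequences $0\to M\to P\to A\to 0$ and $0\to M\to Q\to B\to 0$ with $P,Q$ projective, it is \emph{not} true that $A\oplus Q\cong B\oplus P$: the dual of Schanuel's lemma requires injective middle terms, not projective ones. For a concrete failure take $R=\mathbb{Z}$, $M=\mathbb{Z}$, and the embeddings $\mathbb{Z}\xrightarrow{\,2\,}\mathbb{Z}$ and $\mathbb{Z}\xrightarrow{\,3\,}\mathbb{Z}$; then $A=\mathbb{Z}/2$ and $B=\mathbb{Z}/3$, and $\mathbb{Z}/2\oplus\mathbb{Z}\not\cong\mathbb{Z}/3\oplus\mathbb{Z}$. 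So your iteration producing $M_1\in\Omega^{\infty}(\Mod R)$ breaks down at the first step. (The paper's proof of (2) sidesteps this by simply taking an exact sequence $0\to M\to P^0\to P^1\to\cdots$ and observing, exactly as in your correct first step, that each kernel lies in ${}^{\perp}\mathcal{P}(\Mod R)$ by dimension shifting.)

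Your approach to (1) is entirely different from the paper's and has a gap you yourself flag but do not close. The paper does not induct on $m$ and does not use Theorem~\ref{thm-3.3} at all; it argues directly from the minimal injective coresolution: over an $n$-Gorenstein ring every module has $\Gpd\leq n$ by \cite{EJ}, and then the exact sequence $0\to M\to E^0(M)\to\cdots\to E^{t-1}(M)\to K^t\to 0$ with $t=n-m$ and $\pd_RE^i(M)\leq i+m$ lets one dimension-shift (via \cite{H4}) from $\Gpd_RK^t\leq n=t+m$ down to $\Gpd_RM\leq m$. In your inductive step, by contrast, everything hinges on $\Gpd_RG_1\leq m$, and you only obtain the naive bound $m+d$ from $\fd_RG_1\leq m$; the promised sharpening ``using the concrete pullback structure of $G_1$ and the Gorenstein hypothesis'' is never carried out, and the pullback sequence $0\to K'_1\to G_1\to E^0(M)\to 0$ still only records flat-dimension information, which does not directly control $\Gpd$. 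The base case has the same flavour of trouble: even granting the short exact sequence $0\to L\to K\to M\to 0$ with $K\in\mathcal{GP}(\Mod R)$ and $\pd_RL<\infty$, the long exact sequence only yields $\Gpd_RM\leq\pd_RL+1$, not $\Gpd_RM=0$.
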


\begin{proof}
(1) Let $R$ be a Gorenstein ring with $\id_RR=\id_{R^{op}}R\leq n$, and let $M\in\mathcal{G}_{\infty}(m)$.
Then $\Gpd_RM\leq n$ by \cite[Theorem 12.3.1]{EJ}. It suffices to prove $\Gpd_RM\leq m$. The case for $n\leq m$ is trivial.
Now suppose $n>m$ and $t:=n-m$. Consider the following exact sequence
$$0 \to M \to E^0(M) \to E^1(M)\to \cdots \to E^{t-1}(M) \to K^t\to 0,$$
where $K^t:=\Im(E^{t-1}(M)\to E^t(M))$. By \cite[Theorem 12.3.1]{EJ} again, we have $\Gpd_RK^t\leq n(=t+m)$.
Since $M\in\mathcal{G}_{\infty}(m)$, we have $\pd_RE^i(M)\leq i+m$ for any $0\leq i\leq t-1$.
Then it is easy to get $\Gpd_RM\leq m$ by \cite[Theorem 3.2 and Remark 4.4(3)(a)]{H4}.

(2) It suffices to prove $\Omega^{\infty}(\Mod R)\subseteq\mathcal{GP}(\Mod R)$.
If $R$ is a left Noetherian ring and $\id_RR<\infty$, then $\id_RP<\infty$ for any
$P\in\mathcal{P}(\Mod R)$ by \cite[Theorem 1.1]{B1}. Assume that $M\in\Omega^{\infty}(\Mod R)$ and
$$0\to M\to P^0\to P^1\to\cdots\to P^i\to\cdots$$
is an exact sequence in $\Mod R$ with all $P^i$ in $\mathcal{P}(\Mod R)$. It is easy to see that
the kernel of each homomorphism in the above exact sequence is in ${^{\bot}\mathcal{P}(\Mod R)}$ by dimension shifting.
Thus $M\in\mathcal{GP}(\Mod R)$ and $\Omega^{\infty}(\Mod R)\subseteq\mathcal{GP}(\Mod R)$.
\end{proof}

\section{\bf (Weakly) Gorenstein algebras}

In this section, $R$ is an Artin algebra. Under certain Auslander-type conditions, we will give some equivalent characterizations
for $\id_RR<\infty$ as well as for (weakly) Gorenstein algebras. As applications, we give some partial answers to
some related homological conjectures.

\subsection{Auslander-type conditions}

For any $M\in\Mod R$ and $m\geq 0$, we write
$${^{\bot_{\geq m+1}}}M:=\{A\in\Mod R\mid\Ext^{\geq m+1}_{R}(A,M)=0\}.$$

\begin{lemma}\label{lem-4.1}
Let $M\in\mod R$ such that $\Omega^{\infty}(\mod R)\subseteq{^{\bot_{\geq m+1}}}M\cap\mod R$ for some $m\geq 0$.
If there exists some $n\geq 0$ such that $\pd_RE^i(M)\leq n$ for any $i\geq n+m+1$, then $\id_RM\leq n+m$.
\end{lemma}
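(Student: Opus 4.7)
My plan is to prove $\id_R M \leq n+m$ by showing $K := \Omega^{-(n+m+1)}(M) = 0$, which by minimality of the injective coresolution is equivalent to the desired conclusion. Since $E^{n+m}(M)$ is, by minimality, the injective envelope of $\Omega^{-(n+m)}(M)$, the inclusion $\Omega^{-(n+m)}(M) \hookrightarrow E^{n+m}(M)$ is essential; hence $K=0$ will follow once the short exact sequence
$$0 \longrightarrow \Omega^{-(n+m)}(M) \longrightarrow E^{n+m}(M) \longrightarrow K \longrightarrow 0$$
splits (so $K$ becomes a submodule of $E^{n+m}(M)$ disjoint from $\Omega^{-(n+m)}(M)$, forcing $K=0$ by essentiality). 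Equivalently, by dimension shifting along the injective coresolution of $M$, it suffices to prove $\Ext^{n+m+1}_R(K,M)=0$, since $\Ext^1_R(K,\Omega^{-(n+m)}(M)) \cong \Ext^{n+m+1}_R(K,M)$.

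To obtain this vanishing, I want to show $K \in \Omega^\infty(\mod R)$ so that the hypothesis $\Omega^\infty(\mod R) \subseteq {^{\bot_{\geq m+1}}}M\cap \mod R$ applies (and then $\Ext^{n+m+1}_R(K,M)=0$ because $n+m+1\geq m+1$). The tail $0\to K\to E^{n+m+1}(M)\to E^{n+m+2}(M)\to\cdots$ of the coresolution of $M$ is already an injective coresolution of $K$, and by hypothesis each $E^{n+m+j}(M)$ has $\pd_R\leq n$. Taking projective resolutions of each $E^{n+m+j}(M)$ of length $\leq n$ and weaving them horizontally (a Cartan--Eilenberg / horseshoe-type construction on the complex $E^{n+m+1}(M)\to E^{n+m+2}(M)\to\cdots$) should produce a genuine projective coresolution $K\hookrightarrow Q^0\to Q^1\to\cdots$, placing $K$ in $\Omega^\infty(\mod R)$.

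The main obstacle is the weaving step, where one must combine the horizontal exactness of the coresolution with the vertical projective resolutions of each $E^{n+m+j}(M)$ to extract a projective coresolution of $K$ (and not merely of the complex). I expect this to be handled by induction on $n$. The base case $n=0$ is immediate: the $E^i(M)$ with $i\geq m+1$ are themselves projective, so $0\to K\to E^{m+1}(M)\to E^{m+2}(M)\to\cdots$ is already a projective coresolution and $K\in\Omega^\infty(\mod R)$ directly. For the inductive step, one replaces $E^{n+m+1}(M)$ by its projective cover $0\to L\to P\to E^{n+m+1}(M)\to 0$ (with $\pd_R L\leq n-1$) and, via an appropriate pullback, reduces to a situation of projective dimension at most $n-1$ to which the inductive hypothesis applies. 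Checking that each reduction simultaneously preserves the orthogonality hypothesis $\Omega^\infty(\mod R)\subseteq{^{\bot_{\geq m+1}}}(-)$ and the bound on $\pd_R$ of the later injectives is the delicate bookkeeping I would expect to be the main technical hurdle.
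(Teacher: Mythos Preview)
Your overall strategy --- show that the short exact sequence $0\to\Omega^{-(n+m)}(M)\to E^{n+m}(M)\to K\to 0$ splits by proving $\Ext_R^{n+m+1}(K,M)=0$ --- is exactly the paper's approach. The gap is in your claim that the horseshoe/Cartan--Eilenberg construction on the tail $0\to K\to E^{n+m+1}(M)\to E^{n+m+2}(M)\to\cdots$ yields a projective coresolution of $K$ itself, i.e.\ $K\in\Omega^\infty(\mod R)$. It does not: the construction produces a bicomplex whose top row (level $n$) is an exact sequence $0\to\Omega^n K\to P_n^{n+m+1}\to P_n^{n+m+2}\to\cdots$ with projective terms (since $\pd_R E^{n+m+j}(M)\leq n$ forces the $n$-th syzygies to be projective), placing $\Omega^n K$ in $\Omega^\infty(\mod R)$; the leftmost column is then just a length-$n$ projective resolution $0\to\Omega^n K\to P_{n-1}\to\cdots\to P_0\to K\to 0$. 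From $\Omega^n K\in\Omega^\infty(\mod R)$ one cannot conclude $K\in\Omega^\infty(\mod R)$: any module $N$ with $\pd_R N\leq n$ has $\Omega^n N$ projective (hence in $\Omega^\infty$) without $N$ being even a first syzygy. Your proposed induction on $n$ has the same defect: passing to first syzygies of all the $E^{n+m+j}(M)$ reduces to the case $n-1$, but for $\Omega^1 K$ rather than $K$, so the induction closes up only to the statement $\Omega^n K\in\Omega^\infty$.

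The fix is immediate and is precisely what the paper does: from $\Omega^n K\in\Omega^\infty(\mod R)\subseteq{^{\bot_{\geq m+1}}}M$, dimension-shift along the length-$n$ projective resolution of $K$ to obtain $K\in{^{\bot_{\geq n+m+1}}}M$, whence $\Ext_R^{n+m+1}(K,M)=0$ and the splitting follows. So your plan becomes a complete proof once you replace ``$K\in\Omega^\infty$'' by ``$\Omega^n K\in\Omega^\infty$'' and insert this one extra dimension shift.
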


\begin{proof}
Let $M\in\mod R$. 
Set $K^i:=\Im(E^{i-1}(M)\to E^{i}(M))$ for any $i\geq 1$. Since $\pd_RE^i(M)\leq n$ for any $i\geq n+m+1$,
by the horseshoe lemma we obtain the following exact and commutative diagram
$$\xymatrix{& 0 \ar[d] & 0 \ar[d] & 0 \ar[d] & & 0 \ar[d] &\\
0 \ar[r] & K_n^{n+m+1}\ar[d]\ar[r] & P_n^{n+m+1}\ar[d]\ar[r] & P_n^{n+m+2}\ar[d]\ar[r]
& \cdots\ar[r]& P_n^{n+m+i}\ar[d]\ar[r] & \cdots\\
0 \ar[r] & P_{n-1}\ar[d]\ar[r] & P_{n-1}^{n+m+1}\ar[d]\ar[r] & P_{n-1}^{n+m+2}\ar[d]\ar[r]
& \cdots\ar[r]& P_{n-1}^{n+m+i}\ar[d]\ar[r] & \cdots\\
&\vdots \ar[d] & \vdots\ar@{-->}[d] & \vdots \ar[d] & & \vdots \ar[d] &\\
0 \ar[r] & P_1\ar[d]\ar[r] & P_1^{n+m+1}\ar[d]\ar[r] & P_1^{n+m+2}\ar[d]\ar[r] & \cdots\ar[r]& P_1^{n+m+i}\ar[d]\ar[r] & \cdots\\
0 \ar[r] & P_0\ar[d]\ar[r] & P_0^{n+m+1}\ar[d]\ar[r] & P_0^{n+m+2}\ar[d]\ar[r] & \cdots\ar[r]& P_0^{n+m+i}\ar[d]\ar[r] & \cdots\\
0 \ar[r] & K^{n+m+1}\ar[d]\ar[r] & E^{n+m+1}(M)\ar[d]\ar[r] & E^{n+m+2}(M)\ar[d]\ar[r]
& \cdots\ar[r] & E^{n+m+i}(M)\ar[d]\ar[r] & \cdots\\
& 0  & 0  & 0 & & 0}$$
in $\mod R$ with all $P_j$ and $P_j^t$ projective. Then $K_n^{n+m+1}\in\Omega^{\infty}(\mod R)$, and thus
$K_n^{n+m+1}\in{^{\bot_{\geq m+1}}}M\cap\mod R$ by assumption.
It follows from the leftmost column in the above diagram that $K^{n+m+1}\in{^{\bot_{\geq n+m+1}}}M\cap\mod R$.
Now applying the functor $\Hom_R(K^{n+m+1},-)$ to the exact sequence
$$0\to M\to E^0(M)\to E^1(M)\to\cdots\to E^{n+m-1}(M)\to K^{n+m}\to 0$$
yields $\Ext_R^1(K^{n+m+1},K^{n+m})=0$. It implies that the exact sequence
$$0\to K^{n+m}\to E^{n+m}(M)\to K^{n+m+1}\to 0$$
splits and $K^{n+m}$ is a direct summand of $E^{n+m}(M)$. Thus $K^{n+m}$ is injective and $\id_RM\leq n+m$.
\end{proof}

\begin{remark}\label{rem-4.2}
The same argument as above essentially proves the following result:
Let $R$ be an arbitrary ring (not necessarily an Artin algebra) and let $M\in\Mod R$ such that
$\Omega^{\infty}(\Mod R)\subseteq{^{\bot_{\geq m+1}}}M$ for some $m\geq 0$.
If there exists some $n\geq 0$ such that $\pd_RE^i(M)\leq n$ for any $i\geq n+m+1$, then $\id_RM\leq n+m$.
\end{remark}

Recall from \cite{RZ} that an Artin algebra $R$ is called {\it left weakly Gorenstein} if
$\mathcal{GP}(\mod R)={^{\bot}{_RR}}\cap\mod R$. Symmetrically, the notion of
{\it right weakly Gorenstein algebras} is defined.

\begin{proposition}\label{prop-4.3}
\begin{enumerate}
\item[]
\item[$(1)$] Assume that there exists some $n,m\geq 0$ such that $\pd_RE^i(_RR)\leq n$ for any $i\geq n+m+1$.
If $\Omega^{\infty}(\mod R)\subseteq{^{\bot_{\geq m+1}}{_RR}}\cap\mod R$, then $\id_RR\leq n+m$.
\item[$(2)$] Assume that there exists some $n\geq 0$ such that $\pd_RE^i(_RR)\leq n$ for any $i\geq n+1$.
If $R$ is right weakly Gorenstein and $\Omega^{\infty}(\mod R)=\mathcal{T}(\mod R)$,
then $\id_RR\leq n$.
\end{enumerate}
\end{proposition}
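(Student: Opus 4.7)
The plan is to treat part (1) as essentially an immediate corollary of Lemma \ref{lem-4.1}, and then reduce part (2) to the $m=0$ case of part (1). For (1), I apply Lemma \ref{lem-4.1} directly with $M = {_RR}$: the bound $\pd_R E^i({_RR}) \leq n$ for $i \geq n+m+1$ and the orthogonality inclusion $\Omega^{\infty}(\mod R) \subseteq {^{\bot_{\geq m+1}}{_RR}} \cap \mod R$ are precisely the two hypotheses the lemma demands, so the conclusion $\id_R R \leq n+m$ follows at once. No further work is needed for this part.

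For (2), I would observe that the bound $\pd_R E^i({_RR}) \leq n$ for $i \geq n+1$ matches the hypothesis of (1) with $m=0$; hence once I establish the inclusion $\Omega^{\infty}(\mod R) \subseteq {^{\bot}{_RR}} \cap \mod R$, applying (1) with $m=0$ will yield $\id_R R \leq n$. Because of the assumed equality $\Omega^{\infty}(\mod R) = \mathcal{T}(\mod R)$, this reduces to proving $\mathcal{T}(\mod R) \subseteq \mathcal{GP}(\mod R)$, since $\mathcal{GP}(\mod R) \subseteq {^{\bot}{_RR}} \cap \mod R$ by the Auslander--Bridger description of Gorenstein projectives recalled in Section 2.

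The decisive step, and the only place the right weakly Gorenstein hypothesis enters, is a transpose argument. For any $N \in \mathcal{T}(\mod R)$, the definition of $\infty$-torsionfree yields $\Tr N \in {^{\bot}R_R} \cap \mod R^{op}$; the right weak Gorensteinness then forces $\Tr N \in \mathcal{GP}(\mod R^{op})$; Lemma \ref{lem-2.2} upgrades this to $\Tr\Tr N \in \mathcal{GP}(\mod R)$; and since $N$ is projectively equivalent to $\Tr\Tr N$ and $\mathcal{GP}(\mod R)$ is closed under direct summands and contains all projectives, we conclude $N \in \mathcal{GP}(\mod R)$. The main technical point that requires care is precisely this last closure assertion, but it is a standard fact about Gorenstein projective modules. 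Chaining the inclusions $\Omega^{\infty}(\mod R) = \mathcal{T}(\mod R) \subseteq \mathcal{GP}(\mod R) \subseteq {^{\bot}{_RR}} \cap \mod R$ and invoking (1) with $m=0$ delivers $\id_R R \leq n$.
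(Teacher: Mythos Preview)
Your proof is correct and follows essentially the same approach as the paper: part (1) is a direct specialization of Lemma~\ref{lem-4.1} to $M={_RR}$, and part (2) is reduced to part (1) with $m=0$ via the transpose argument and Lemma~\ref{lem-2.2}. The only cosmetic difference is that the paper applies Lemma~\ref{lem-2.2} directly to conclude $N\in\mathcal{GP}(\mod R)$ from $\Tr N\in\mathcal{GP}(\mod R^{op})$, whereas you pass through $\Tr\Tr N$ and invoke projective equivalence, but this is the same argument unpacked one step further.
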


\begin{proof}
(1) Putting $M={_RR}$ in Lemma \ref{lem-4.1}, the assertion follows.

(2) Let $M\in\Omega^{\infty}(\mod R)$. Then $M\in\mathcal{T}(\mod R)$ by assumption,
and so $\Tr M\in{^{\bot}R_R}\cap\mod R^{op}$. Since $R$ is right weakly Gorenstein by assumption,
we have $\Tr M\in{^{\bot}R_R}\cap\mod R^{op}=\mathcal{GP}(\mod R^{op})$. Thus
$M\in\mathcal{GP}(\mod R)\subseteq{^{\bot}{_RR}}\cap\mod R$ by Lemma \ref{lem-2.2}.
This shows $\Omega^{\infty}(\mod R)\subseteq{^{\bot}{_RR}}\cap\mod R$, and then the assertion
follows from (1).
\end{proof}

The following lemma shows that all modules satisfying certain Auslander-type condition over an Artin algebra
satisfy the condition about projective dimension in Lemma \ref{lem-4.1}.

\begin{lemma}\label{lem-4.4}
If $M\in\mathcal{G}_{\infty}(m)$ $($resp. $N\in\mathcal{G}_{\infty}(m)^{op})$ with $m\geq 0$, then there exists
some $n\geq 0$ such that $\pd_RE^i(M)$ $($resp. $\pd_{R^{op}}E^i(N))\leq n$ for any $i\geq 0$.
\end{lemma}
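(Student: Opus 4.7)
The plan is to exploit the classification of injective modules over an Artin algebra to extract a uniform bound. Since $R$ is an Artin algebra, it is left Noetherian, so every injective left $R$-module decomposes as a direct sum of indecomposable injectives; moreover, there are only finitely many isomorphism classes of indecomposable injectives $I_1,\dots,I_k$, each of which is the injective envelope of a simple $R$-module and is therefore finitely generated. Thus for each $i\geq 0$ I can write
$$E^i(M)\cong\bigoplus_{j=1}^{k}I_j^{(\alpha_{ij})}$$
for suitable index sets.

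Next I would use the hypothesis $M\in\mathcal{G}_\infty(m)$, which gives $\fd_R E^i(M)\leq i+m<\infty$. If $I_j$ is a direct summand of some $E^i(M)$, then $\fd_R I_j\leq\fd_R E^i(M)<\infty$. Since $I_j$ is finitely generated over the left Noetherian ring $R$, a standard syzygy argument (a finitely generated projective resolution has its $n$-th syzygy flat and finitely generated, hence projective) yields $\pd_R I_j=\fd_R I_j<\infty$.

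Let $T\subseteq\{I_1,\dots,I_k\}$ be the finite set of indecomposable injectives appearing as summands of some $E^i(M)$, and put $n:=\max\{\pd_R I_j:I_j\in T\}$, which is finite since $T$ is finite and each $\pd_R I_j<\infty$. Because the projective dimension of an arbitrary direct sum is bounded by the supremum of the projective dimensions of the summands (apply $\Ext_R^{n+1}(-,N)$, which turns direct sums into products, to the direct sum decomposition), we conclude $\pd_R E^i(M)\leq n$ for every $i\geq 0$. The statement for $N\in\mathcal{G}_\infty(m)^{op}$ follows by the symmetric argument over $R^{op}$, which is again an Artin algebra.

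There is no real obstacle here; the argument is essentially bookkeeping. The only point worth noting is the passage from finite flat dimension to finite projective dimension for the indecomposable injectives, which relies crucially on their being finitely generated together with $R$ being left Noetherian.
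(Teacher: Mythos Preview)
Your argument is correct and follows essentially the same route as the paper: both proofs use that an Artin algebra has only finitely many indecomposable injectives, list those that actually occur as summands of the $E^i(M)$, and take $n$ to be the maximum of their projective dimensions. You are simply more explicit than the paper about why each such indecomposable summand has finite projective dimension (via $\fd_R E^i(M)\leq i+m$ and the equality $\pd=\fd$ for finitely generated modules over a Noetherian ring), a step the paper leaves implicit.
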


\begin{proof}
Since $R$ is an Artin algebra, there exist only finitely many non-isomorphic indecomposable injective left
$R$-modules. Let $M\in\mathcal{G}_{\infty}(m)$. Without loss of generalization,
suppose that $\{E^0,\cdots,E^t\}$ is the complete set of
non-isomorphic indecomposable injective left modules that occur as direct summands of all $E^i(M)$.
Then there exists some $n\geq 0$ such that $\pd_RE^i\leq n$ for any
$1\leq i\leq t$, and thus $\pd_RE^i(M)\leq n$ for any $i\geq 0$. Symmetrically, if
$N\in\mathcal{G}_{\infty}(m)^{op}$, then there exists
some $n\geq 0$ such that $\pd_{R^{op}}E^i(N)\leq n$ for any $i\geq 0$.
\end{proof}

As a consequence, we obtain the following result.

\begin{proposition}\label{prop-4.5}
If $\mathcal{GP}(\mod R)=\mathcal{G}_{\infty}(m)\cap\mod R$ for some $m\geq 0$, then $\id_RR<\infty$.
\end{proposition}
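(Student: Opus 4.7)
The plan is to apply Proposition~\ref{prop-4.3}(1) to $M={_RR}$, for which I need a uniform projective-dimension bound on the terms $E^i({_RR})$ of its minimal injective coresolution, together with the inclusion $\Omega^{\infty}(\mod R)\subseteq{^{\bot_{\geq m+1}}{_RR}}\cap\mod R$.

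The first ingredient is almost immediate: since ${_RR}$ is projective and therefore Gorenstein projective, the hypothesis $\mathcal{GP}(\mod R)=\mathcal{G}_{\infty}(m)\cap\mod R$ forces ${_RR}\in\mathcal{G}_{\infty}(m)$. Lemma~\ref{lem-4.4} then supplies some $n\geq 0$ with $\pd_R E^i({_RR})\leq n$ for every $i\geq 0$, which is more than what Proposition~\ref{prop-4.3}(1) requires.

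For the second ingredient, I plan to prove the stronger statement $\Omega^{\infty}(\mod R)\subseteq\mathcal{GP}(\mod R)$; since Gorenstein projective modules satisfy $\Ext^{\geq 1}_R(-,R)=0$, this yields the required orthogonality. Given $N\in\Omega^{\infty}(\mod R)$, my first step is to construct an exact coresolution $0\to N\to P^0\to P^1\to\cdots$ with each $P^i$ finitely generated projective. To build it, I would embed $N$ into a finitely generated projective $P^0$ (possible since $N\in\Omega^1(\mod R)$) and verify via Schanuel's lemma that the cokernel $P^0/N$ remains an $\infty$-syzygy---a standard Auslander--Bridger-type argument using closure of $\Omega^n(\mod R)$ under direct summands up to projective summands---then iterate. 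Because ${_RR}\in\mathcal{G}_{\infty}(m)$, Lemma~\ref{lem-3.1} places every finitely generated projective in $\mathcal{G}_{\infty}(m)$, and Lemma~\ref{lem-3.2} applied to the coresolution then gives $N\in\mathcal{G}_{\infty}(m)\cap\mod R=\mathcal{GP}(\mod R)$, as wanted.

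With both hypotheses of Proposition~\ref{prop-4.3}(1) in hand, we conclude $\id_R R\leq n+m<\infty$. The main obstacle is the coresolution construction for finitely generated $\infty$-syzygy modules; the homological bookkeeping afterwards is a routine assembly of Lemmas~\ref{lem-3.1}, \ref{lem-3.2}, \ref{lem-4.4} with the orthogonality built into the definition of Gorenstein projective modules.
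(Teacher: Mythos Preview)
Your proof is correct and follows the same route as the paper's: both establish ${_RR}\in\mathcal{G}_{\infty}(m)$, invoke Lemma~\ref{lem-4.4} for the uniform bound on $\pd_RE^i({_RR})$, show $\Omega^{\infty}(\mod R)\subseteq\mathcal{G}_{\infty}(m)\cap\mod R=\mathcal{GP}(\mod R)\subseteq{^{\bot}{_RR}}\cap\mod R$ via Lemmas~\ref{lem-3.1} and~\ref{lem-3.2}, and finish with Proposition~\ref{prop-4.3}(1). Your explicit construction of an infinite projective coresolution for an $\infty$-syzygy module (the Schanuel step) is exactly what is needed to invoke Lemma~\ref{lem-3.2} as stated; the paper simply cites Lemma~\ref{lem-3.2} without spelling this out.
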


\begin{proof}
Since $_RR\in\mathcal{GP}(\mod R)$, we have $_RR\in\mathcal{G}_{\infty}(m)$ by assumption,
It follows from Lemma \ref{lem-4.4} that $\pd_RE^i(_RR)\leq n$ for any $i\geq 0$.
Since any projective module in $\mod R$ is in $\mathcal{G}_{\infty}(m)$, we have
\begin{align*}
& \Omega^{\infty}(\mod R)\subseteq \mathcal{G}_{\infty}(m)\cap\mod R\ \ \text{(by Lemma \ref{lem-3.2})}\\
&\ \ \ \ \ \ \ \ \ \ \ \ \ \ \ =\mathcal{GP}(\mod R)\ \ \text{(by assumption)}\\
&\ \ \ \ \ \ \ \ \ \ \ \ \ \ \ \subseteq{^{\bot}{_RR}}\cap\mod R.
\end{align*}
Thus $\id_RR\leq n$ by Proposition \ref{prop-4.3}(1).
\end{proof}

We are now in a position to prove the following result, in which assertions (5) and (6) are
finitely generated versions of (3) and (4), respectively.

\begin{theorem}\label{thm-4.6}
For any $m\geq 0$, the following statements are equivalent.
\begin{enumerate}
\item[$(1)$] $_RR\in\mathcal{G}_{\infty}(m)$ and $R$ is Gorenstein.
\item[$(2)$] $_RR\in\mathcal{G}_{\infty}(m)$ and $\id_RR<\infty$.
\item[$(3)$] $\mathcal{GP}(\Mod R)\subseteq\mathcal{G}_{\infty}(m)\subseteq\mathcal{GP}(\Mod R)^{\leq m}$.
\item[$(4)$] $\mathcal{GP}(\Mod R)^{\leq s}\subseteq\mathcal{G}_{\infty}(m+s)\subseteq
\mathcal{GP}(\Mod R)^{\leq m+s}$ for any $s\geq 0$.
\item[$(5)$] $\mathcal{GP}(\mod R)\subseteq\mathcal{G}_{\infty}(m)\cap\mod R\subseteq\mathcal{GP}(\mod R)^{\leq m}$.
\item[$(6)$] $\mathcal{GP}(\mod R)^{\leq s}\subseteq\mathcal{G}_{\infty}(m+s)\cap\mod R
\subseteq\mathcal{GP}(\mod R)^{\leq m+s}$ for any $s\geq 0$.
\end{enumerate}
\end{theorem}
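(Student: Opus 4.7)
The plan is to prove the equivalences by cyclically closing $(1)\Rightarrow(3)\Rightarrow(4)\Rightarrow(2)\Rightarrow(1)$, obtaining $(5)$ and $(6)$ from $(3)$ and $(4)$ by restriction to $\mod R$ (using that for finitely generated modules over an Artin algebra, Gorenstein projective dimension in $\Mod R$ and $\mod R$ coincide). The implication $(1)\Rightarrow(2)$ is trivial. For $(1)\Rightarrow(3)$, I would take $M\in\mathcal{GP}(\Mod R)$, embed it in the right half $0\to M\to P^0\to P^1\to\cdots$ of a complete projective resolution, and use Lemma 3.1 (with $_RR\in\mathcal{G}_\infty(m)$) followed by Lemma 3.2 to conclude $M\in\mathcal{G}_\infty(m)$; the reverse inclusion $\mathcal{G}_\infty(m)\subseteq\mathcal{GP}(\Mod R)^{\leq m}$ is exactly Theorem 3.6(1) for the Gorenstein ring $R$. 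For $(3)\Rightarrow(4)$, I would combine Lemma 3.4 with $(3)$ to get $\mathcal{GP}(\Mod R)^{\leq s}\subseteq\mathcal{G}_\infty(m)^{\leq s}\subseteq\mathcal{G}_\infty(m+s)$, and reapply Theorem 3.6(1) at level $m+s$ for the other inclusion.

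For $(4)\Rightarrow(2)$ I would mimic Proposition 4.5: from $(4)$ with $s=0$ one extracts $_RR\in\mathcal{GP}(\Mod R)\subseteq\mathcal{G}_\infty(m)$, so Lemma 4.4 supplies a uniform $n$ with $\pd_R E^i(_RR)\leq n$; Lemma 3.2 gives $\Omega^\infty(\mod R)\subseteq\mathcal{G}_\infty(m)\cap\mod R$, and the second inclusion of $(4)$ restricted to $\mod R$, combined with the fact that modules of $\Gpd\leq m$ lie in ${^{\bot_{\geq m+1}}{_RR}}$, yields $\Omega^\infty(\mod R)\subseteq{^{\bot_{\geq m+1}}{_RR}}\cap\mod R$. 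Proposition 4.3(1) then delivers $\id_R R\leq n+m$, establishing $(2)$; the parallel implications $(3),(5),(6)\Rightarrow(2)$ are identical.

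The main obstacle is the bridge $(2)\Rightarrow(1)$: one must upgrade the one-sided hypothesis $\id_R R<\infty$ to $\id_{R^{op}}R<\infty$ using only the left-sided $_RR\in\mathcal{G}_\infty(m)$, a sharpened form of the Gorenstein symmetry conjecture. My plan is first to prove $(2)\Rightarrow(5)$: Theorem 3.6(2) applied to $\id_R R<\infty$ gives $\mathcal{GP}(\mod R)=\Omega^\infty(\mod R)$, which with Lemma 3.2 supplies the first inclusion of $(5)$; for the second, I would apply Theorem 3.3(1) with $n$ large to represent $M\in\mathcal{G}_\infty(m)\cap\mod R$ as $\Im(X_0\to G_1)$ with $\pd_R G_1\leq m$ and $X_1$ a sufficiently high syzygy (hence Gorenstein projective under $\id_R R<\infty$), from which the two-of-three calculus for $\Gpd$ applied to $0\to M\to G_1\to X_1\to 0$ forces $\Gpd_R M\leq m$. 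Then to reach $(1)$, I would dualize via the Artin duality to reduce $\id_{R^{op}}R<\infty$ to $\pd_R D(R_R)<\infty$; since $D(R_R)$ is a finite direct sum of all indecomposable injective left $R$-modules and those appearing as summands of some $E^i(_RR)$ in the finite coresolution $0\to R\to E^0\to\cdots\to E^\ell\to 0$ already have $\pd_R\leq n$ by Lemma 4.4, the real crux is a generalized-Nakayama-type statement forcing $\Ext^*_R(S,R)\neq 0$ for every simple $S$. This step will be the hardest part, and must be extracted from the containment $\mathcal{G}_\infty(m)\cap\mod R\subseteq\mathcal{GP}(\mod R)^{\leq m}$ via an Ext/Tor analysis in the spirit of Proposition 4.3 and Lemma 4.1.
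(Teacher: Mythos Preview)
Your implications $(1)\Rightarrow(3)$, $(5)\Rightarrow(2)$ (your $(4)\Rightarrow(2)$ specialized to $s=0$), and the passage to $(4)$ via Lemma~\ref{lem-3.4} and Theorem~\ref{thm-3.6}(1) all match the paper's argument. One minor point: your step $(3)\Rightarrow(4)$ invokes Theorem~\ref{thm-3.6}(1), which already requires $R$ Gorenstein; the paper accordingly records this as $(1)+(3)\Rightarrow(4)$ rather than as a direct consequence of $(3)$ alone.

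The genuine gap is $(2)\Rightarrow(1)$. The paper dispatches it in one line by citing the symmetric version of \cite[Corollary~3]{H1}, an external result guaranteeing that for an Artin algebra with $_RR\in\mathcal{G}_\infty(m)$, finiteness of $\id_R R$ forces finiteness of $\id_{R^{op}}R$. Your proposed detour---first $(2)\Rightarrow(5)$, then a generalized-Nakayama-type extraction---does not close. For the second inclusion of $(5)$ under only $\id_R R<\infty$: Theorem~\ref{thm-3.3}(1) yields $X_1\in\Omega^{n-1}(\mod R)$ for one fixed $n$, not $X_1\in\Omega^\infty(\mod R)$, so Theorem~\ref{thm-3.6}(2) does not apply and you cannot conclude that $X_1$ is Gorenstein projective (high syzygies are Gorenstein projective over \emph{Gorenstein} rings, but that is exactly what is in question); nor do you know $\Gpd_R M<\infty$ a priori, which your two-of-three argument needs. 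More seriously, your final step---forcing $\Ext^*_R(S,R)\neq 0$ for every simple $S$ from the containment in $(5)$---is the generalized Nakayama conjecture itself, which is open, and there is no evident mechanism by which $(5)$ constrains a simple $S$ with $\Ext^*_R(S,R)=0$, since such an $S$ has no reason to lie in $\mathcal{G}_\infty(m)$. The citation of \cite{H1} is thus the substantive input for $(2)\Rightarrow(1)$, not a shortcut around a routine step.
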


\begin{proof}
The implications $(1)\Longrightarrow (2)$, $(4)\Longrightarrow (3)\Longrightarrow (5)$ and
$(4)\Longrightarrow (6)\Longrightarrow (5)$ are trivial. By the symmetric version of \cite[Corollary 3]{H1},
we get $(2)\Longrightarrow (1)$.

$(1)\Longrightarrow (3)$ Since $R$ is Gorenstein by (1), we have $\mathcal{G}_{\infty}(m)
\subseteq\mathcal{GP}(\Mod R)^{\leq m}$ by Theorem \ref{thm-3.6}(1). On the other hand, since
$_RR\in\mathcal{G}_{\infty}(m)$ by (1), we have $\mathcal{P}(\Mod R)\subseteq\mathcal{G}_{\infty}(m)$
by Lemma \ref{lem-3.1}, and thus
$$\mathcal{GP}(\Mod R)\subseteq\Omega^{\infty}(\Mod R)\subseteq\mathcal{G}_{\infty}(m)$$
by Lemma \ref{lem-3.2}.

$(5)\Longrightarrow (2)$ Since any projective module in $\mod R$ is in $\mathcal{G}_{\infty}(m)\cap\mod R$ by (5), we have
$$\Omega^{\infty}(\mod R)\subseteq \mathcal{G}_{\infty}(m)\cap\mod R\subseteq\mathcal{GP}(\mod R)^{\leq m}
\subseteq{^{\bot_{\geq m+1}}{_RR}}\cap\mod R$$
by Lemma \ref{lem-3.2} and (5). Since $_RR\in\mathcal{G}_{\infty}(m)\cap\mod R$, there exists some $n\geq 0$ such that
$\pd_RE^i(_RR)\leq n$ for any $i\geq 0$ by Lemma \ref{lem-4.4}, and thus $\id_RR\leq n+m$ by Proposition \ref{prop-4.3}(1).

$(1)+(3)\Longrightarrow (4)$ Let $s\geq 0$. Since $\mathcal{GP}(\Mod R)\subseteq\mathcal{G}_{\infty}(m)$ by (3), we have
$$\mathcal{GP}(\Mod R)^{\leq s}\subseteq\mathcal{G}_{\infty}(m)^{\leq s}\subseteq\mathcal{G}_{\infty}(m+s)$$
by Lemma \ref{lem-3.4}. Since $R$ is Gorenstein by (1), we have $\mathcal{G}_{\infty}(m+s)\subseteq\mathcal{GP}(\Mod R)^{\leq m+s}$
by Theorem \ref{thm-3.6}(1).
\end{proof}

We need the following result.

\begin{proposition}\label{prop-4.7}
If $\id_RR<\infty$, then $R$ is right weakly Gorenstein.
The converse holds true if one of the following conditions is satisfied.
\begin{enumerate}
\item[$(1)$] $_RR\in\mathcal{G}_{\infty}(1)$.
\item[$(2)$] $_RR\in\mathcal{G}_{\infty}(m)$ and $R_R\in\mathcal{G}_{\infty}(m')^{op}$ for some $m,m'\geq 0$.
\end{enumerate}
\end{proposition}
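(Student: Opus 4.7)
The argument splits into the forward and converse directions. For the forward direction, I would combine Theorem~\ref{thm-3.6}(2) with Lemma~\ref{lem-2.2}. Given $N\in{^{\bot}R_{R}}\cap\mod R^{op}$, the vanishing $\Ext^{\geq 1}_{R^{op}}(N,R)=0$ makes the dualized projective resolution $0\to N^{*}\to P_{0}^{*}\to P_{1}^{*}\to\cdots$ exact in $\mod R$, so that $\Tr N=\Coker(P_{0}^{*}\to P_{1}^{*})=\Im(P_{1}^{*}\to P_{2}^{*})$ sits as an $\infty$-syzygy, i.e.\ $\Tr N\in\Omega^{\infty}(\Mod R)\cap\mod R$. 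Since $\id_{R}R<\infty$, Theorem~\ref{thm-3.6}(2) identifies $\Tr N$ as Gorenstein projective, and Lemma~\ref{lem-2.2} transports this to $N\in\mathcal{GP}(\mod R^{op})$. The reverse inclusion is immediate from the definition of Gorenstein projectivity.

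For the converse, my plan is to apply Proposition~\ref{prop-4.3}(2). The common hypothesis $_{R}R\in\mathcal{G}_{\infty}(m)$ in both (1) and (2), combined with Lemma~\ref{lem-4.4}, supplies an $n\geq 0$ with $\pd_{R}E^{i}({_{R}R})\leq n$ for every $i\geq 0$, hence in particular for $i\geq n+1$; right weak Gorensteinness is in hand. Consequently, provided one can verify the equality $\Omega^{\infty}(\mod R)=\mathcal{T}(\mod R)$, Proposition~\ref{prop-4.3}(2) concludes $\id_{R}R\leq n<\infty$.

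The entire converse therefore hinges on the inclusion $\Omega^{\infty}(\mod R)\subseteq\mathcal{T}(\mod R)$ (the reverse inclusion is automatic) under each of the two hypotheses; equivalently, on $\Ext^{\geq 1}_{R^{op}}(\Tr M,R)=0$ for every $M\in\Omega^{\infty}(\mod R)$. Under (2), the right-sided condition $R_{R}\in\mathcal{G}_{\infty}(m')^{op}$ translates, via the Huang--Iyama grade reformulation, into grade bounds on $\Ext^{i+m'}_{R}(L,R)$ for $L\in\mod R$; combining these bounds with Auslander--Bridger-type exact sequences relating $\Ext^{\bullet}_{R^{op}}(\Tr M,R)$ to shifted Ext of $M^{*}$, and exploiting that $M\in\Omega^{\infty}$ is an arbitrarily high syzygy, forces the desired vanishing. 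Under (1) only $_{R}R\in\mathcal{G}_{\infty}(1)$ is available, and the analogous grade-shift mechanism now controls $\Ext^{i+1}_{R^{op}}(\Tr M,R)$; the missing right-sided symmetry is recovered by feeding the resulting grade-zero data back through the right weakly Gorenstein hypothesis, exploiting the standard principle that a Gorenstein projective module with vanishing dual must itself be zero.

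The principal obstacle is precisely this last step: verifying $\Omega^{\infty}(\mod R)\subseteq\mathcal{T}(\mod R)$ under each Auslander-type hypothesis. It requires a careful interlocking of the Huang--Iyama grade inequality with the Auslander--Bridger four-term sequence, and condition (1) is noticeably more delicate than (2) since only one-sided Auslander-type control is assumed and the symmetry must be closed via the right weak Gorensteinness hypothesis itself rather than by an additional Auslander-type assumption.
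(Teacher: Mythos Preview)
Your forward direction is correct and in fact more self-contained than the paper's: the paper simply cites the symmetric versions of \cite[Lemma~3.4]{HT} and \cite[Theorem~1.2]{RZ}, whereas you route the argument through Theorem~\ref{thm-3.6}(2) and Lemma~\ref{lem-2.2}, which works cleanly and keeps everything internal.

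For the converse, your skeleton agrees exactly with the paper's: Lemma~\ref{lem-4.4} bounds the $\pd_R E^i({_RR})$, the crux is the equality $\Omega^{\infty}(\mod R)=\mathcal{T}(\mod R)$, and then Proposition~\ref{prop-4.3}(2) finishes. The difference is entirely in how that crux is handled. The paper does \emph{not} attempt to derive it from scratch: under~(2) it invokes \cite[Theorem~3.4]{HI} (which gives the equality directly from the ring being $G_{\infty}(m')$, i.e.\ from $R_R\in\mathcal{G}_{\infty}(m')^{op}$), and under~(1) it invokes \cite[Proposition~1.6(a)]{AR2} together with the symmetric version of \cite[Theorem~0.1]{AR2}, which yield the equality from ${_RR}\in\mathcal{G}_{\infty}(1)$ \emph{alone}. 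In neither case does the paper use the right weakly Gorenstein hypothesis to obtain $\Omega^{\infty}(\mod R)=\mathcal{T}(\mod R)$; that hypothesis enters only afterwards, inside Proposition~\ref{prop-4.3}(2).

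This matters for your treatment of~(1). Your plan there is to extract grade information on $\Ext^{i+1}_{R^{op}}(\Tr M,R)$ from the left-sided $\mathcal{G}_{\infty}(1)$ condition and then ``close the loop'' via right weak Gorensteinness and the vanishing-dual trick. As written this is not a proof: the grade bounds do not by themselves force $\Ext^{\geq 1}_{R^{op}}(\Tr M,R)=0$, and it is unclear why the relevant Ext modules would be Gorenstein projective (the hypothesis ${^{\bot}R_R}\cap\mod R^{op}=\mathcal{GP}(\mod R^{op})$ only applies once you already know the module lies in ${^{\bot}R_R}$, which is precisely what you are trying to prove). The Auslander--Reiten argument in \cite{AR2} is the clean way around this, and it does not need the weakly Gorenstein input at all. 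For~(2) your sketch is essentially a paraphrase of what \cite[Theorem~3.4]{HI} proves, so there the gap is only one of citation.
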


\begin{proof}
The former assertion follows from the symmetric versions of \cite[Lemma 3.4]{HT} and \cite[Theorem 1.2]{RZ}.

Conversely, since $_RR\in\mathcal{G}_{\infty}(1)$ or $_RR\in\mathcal{G}_{\infty}(m)$ with $m\geq 0$ by assumption,
it follows from Lemma \ref{lem-4.4} that there exists some $n\geq 0$ such that $\pd_RE^i(_RR)\leq n$ for any $i\geq 0$.
When $_RR\in\mathcal{G}_{\infty}(1)$, we have $\Omega^{\infty}(\mod R)=\mathcal{T}(\mod R)$
by \cite[Proposition 1.6(a)]{AR2} and the symmetric version of \cite[Theorem 0.1]{AR2};
when $R_R\in\mathcal{G}_{\infty}(m')^{op}$ with $m'\geq 0$, that is, the algebra $R$ is $G_{\infty}(m')$,
we also have $\Omega^{\infty}(\mod R)=\mathcal{T}(\mod R)$
by \cite[Theorem 3.4]{HI}. Thus $\id_RR\leq n$ by Proposition \ref{prop-4.3}(2).
\end{proof}

The following corollary was proved in \cite[p.33]{RZ}, we give it a shorter proof.

\begin{corollary}\label{cor-4.8}
{\bf WGSC} implies {\bf GSC}.
\end{corollary}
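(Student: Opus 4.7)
My plan is to derive GSC from WGSC by combining the unconditional forward direction of Proposition~\ref{prop-4.7} with WGSC itself, and then invoking a standard criterion identifying Iwanaga--Gorenstein Artin algebras via uniformly bounded Gorenstein projective dimension. Since both GSC and WGSC are left--right symmetric statements, it suffices to prove one direction of the equivalence in GSC for an arbitrary Artin algebra $R$; accordingly I would reduce to showing that, under WGSC, $\id_R R < \infty$ forces $\id_{R^{op}} R < \infty$.

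Suppose $\id_R R = n < \infty$. First I would apply the forward implication in Proposition~\ref{prop-4.7}, which needs no Auslander-type hypothesis, to obtain that $R$ is right weakly Gorenstein. WGSC then upgrades this to $R$ being left weakly Gorenstein, i.e., $\mathcal{GP}(\mod R) = {^{\bot}}{_RR} \cap \mod R$. Next, for an arbitrary $M \in \mod R$, dimension shifting together with $\id_R R = n$ gives $\Ext_R^{\geq 1}(\Omega^n M, {_RR}) \cong \Ext_R^{\geq n+1}(M, {_RR}) = 0$, so $\Omega^n M \in {^{\bot}}{_RR} \cap \mod R = \mathcal{GP}(\mod R)$. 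This yields the uniform bound $\Gpd_R M \leq n$ over all of $\mod R$.

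The last step is to conclude $\id_{R^{op}} R < \infty$ from this global bound. My intended route is to invoke the standard fact that an Artin algebra $R$ with $\sup\{\Gpd_R M : M \in \mod R\} < \infty$ is automatically Iwanaga--Gorenstein (equivalently, via Matlis duality $D$, the corresponding uniform bound $\sup\{\Gid_{R^{op}} N : N \in \mod R^{op}\} < \infty$ combined with the existence of the injective cogenerator $D({_RR})$ with $\pd_{R^{op}} D({_RR}) = \id_R R < \infty$ forces $R$ to be Gorenstein). Granting this, $R$ is Gorenstein, hence $\id_{R^{op}} R < \infty$, and GSC follows.

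The main obstacle is precisely this final invocation: the one-sided "globally bounded $\Gpd \Rightarrow $ Iwanaga--Gorenstein" criterion is folklore but requires careful attribution, and if only a two-sided Beligiannis--Reiten-type criterion is readily available in the paper's framework, one must complement the left-hand bound obtained above with its Matlis-dual incarnation on $\mod R^{op}$ before concluding. I expect the author's "shorter proof" to consist in citing such a Gorenstein-characterisation result together with Proposition~\ref{prop-4.7}, packaging the three bullet points above into essentially a single line.
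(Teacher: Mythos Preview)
Your proposal is correct and follows essentially the same route as the paper's proof: invoke Proposition~\ref{prop-4.7} to pass from $\id_RR<\infty$ to right weakly Gorenstein, use {\bf WGSC} to get left weakly Gorenstein, observe via dimension shifting that every $n$-syzygy lies in ${^{\bot}{_RR}}\cap\mod R=\mathcal{GP}(\mod R)$ so that $\Gpd_RM\leq n$ for all $M\in\mod R$, and conclude Gorensteinness. Your anticipated ``obstacle'' is resolved exactly as you predicted: the paper cites \cite[Theorem 12.3.1]{EJ} for the one-sided criterion that a uniform bound on $\Gpd_R$ over $\mod R$ forces $R$ to be $n$-Gorenstein.
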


\begin{proof}
Suppose that {\bf WGSC} holds true. If $\id_RR=n<\infty$, then $R$ is right weakly Gorenstein by
Proposition \ref{prop-4.7}, and hence is left weakly Gorenstein. It follows that
any $n$-syzygy module in $\mod R$ is in ${^{\bot}{_RR}}\cap\mod R=\mathcal{GP}(\mod R)$. So
$\Gpd_RM\leq n$ for any $M\in\mod R$, and hence $R$ is $n$-Gorenstein (that is, $\id_RR=\id_{R^{op}}R\leq n$)
by \cite[Theorem 12.3.1]{EJ}. Symmetrically, we have that if $\id_{R^{op}}R=n<\infty$, then $R$ is $n$-Gorenstein.
Thus {\bf GSC} holds true.
\end{proof}

The following result shows that the Gorensteinness and weakly Gorensteinness of an Artin algebra
are equivalent under certain Auslander-type conditions. It also shows that both {\bf GSC} and {\bf WGSC}
hold true for an Artin algebra $R$ such that $_RR$ and $R_R$ satisfy certain
Auslander-type conditions.

\begin{theorem}\label{thm-4.9}
If $_RR\in\mathcal{G}_{\infty}(m)$ and $R_R\in\mathcal{G}_{\infty}(m')^{op}$
with $m,m'\geq 0$, then the following statements are equivalent.
\begin{enumerate}
\item[$(1)$] $R$ is Gorenstein.
\item[$(2)$] $R$ is left and right weakly Gorenstein.
\item[$(3)$] $\id_RR<\infty$.
\item[$(4)$] $R$ is left weakly Gorenstein.
\item[$(5)$] $\mathcal{GP}(\Mod R)={^{\bot}{\mathcal{P}(\Mod R)}}$.
\item[$(i)^{op}$] Opposite version of $(i)$ with $3\leq i\leq 5$.
\end{enumerate}
\end{theorem}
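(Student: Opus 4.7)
The plan is to assemble the theorem by weaving together Theorem \ref{thm-4.6}, Proposition \ref{prop-4.7}, and the classical characterization of Gorenstein rings via left orthogonals of projectives. The spine will be the chain of equivalences
$$(1)\ \Longleftrightarrow\ (3)\ \Longleftrightarrow\ (3)^{op}\ \Longleftrightarrow\ (4)\ \Longleftrightarrow\ (4)^{op},$$
from which $(1)\Rightarrow(2)$ drops out and $(2)\Rightarrow(4)$ is trivial. Statements $(5)$ and $(5)^{op}$ are then grafted on.

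For the spine, $(1)\Leftrightarrow(3)$ is the $(1)\Leftrightarrow(2)$ equivalence of Theorem \ref{thm-4.6} applied with $_RR\in\mathcal{G}_{\infty}(m)$, and $(1)\Leftrightarrow(3)^{op}$ is its $R^{op}$-analogue applied with $R_R\in\mathcal{G}_{\infty}(m')^{op}$. Proposition \ref{prop-4.7}(2) is precisely the statement $(3)\Leftrightarrow(4)^{op}$ under the two-sided Auslander-type hypothesis, and its opposite yields $(3)^{op}\Leftrightarrow(4)$. Concatenating these four equivalences produces the spine, and then $(1)\Rightarrow(2)$ is immediate since Gorensteinness delivers both one-sided weakly Gorenstein conditions.

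To incorporate $(5)$, use that $\mathcal{GP}(\Mod R)\subseteq{^{\bot}\mathcal{P}(\Mod R)}$ holds unconditionally. For $(1)\Rightarrow(5)$, if $R$ is $n$-Gorenstein then every module has finite Gorenstein projective dimension by \cite[Theorem 12.3.1]{EJ}, so any $M\in{^{\bot}\mathcal{P}(\Mod R)}\subseteq{^{\bot}{_RR}}$ satisfies $\Ext^{\geq 1}_R(M,R)=0$ together with $\Gpd_RM<\infty$, forcing $\Gpd_RM=0$. For $(5)\Rightarrow(4)$, take $M\in{^{\bot}{_RR}}\cap\mod R$; finite generation over the left Noetherian ring $R$ lets $\Ext^{\geq 1}_R(M,-)$ commute with direct sums, so $\Ext^{\geq 1}_R(M,R^{(I)})=0$ for every index set $I$, whence $M\in{^{\bot}\mathcal{P}(\Mod R)}=\mathcal{GP}(\Mod R)$ and $M\in\mathcal{GP}(\mod R)$. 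The companion equivalence $(1)\Leftrightarrow(5)^{op}$ and the implication $(5)^{op}\Rightarrow(4)^{op}$ follow by symmetry, closing the diagram.

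The main delicate point I anticipate is the $(5)\Rightarrow(4)$ step, where finite generation of $M$ is what lifts vanishing of $\Ext$ against $R$ to vanishing against arbitrary projectives; this conversion would fail in $\Mod R$ in general. Apart from that, no implication is hard individually: the real discipline is bookkeeping. Theorem \ref{thm-4.6} and its opposite each consume only one side of the Auslander-type hypothesis, whereas the bridges $(3)\Leftrightarrow(4)^{op}$ and $(3)^{op}\Leftrightarrow(4)$ supplied by Proposition \ref{prop-4.7}(2) genuinely require both sides, and at every step I must confirm that the correct half of the hypothesis is being invoked.
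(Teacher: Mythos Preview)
Your proposal is correct and follows essentially the same route as the paper's own proof: the spine $(1)\Leftrightarrow(3)\Leftrightarrow(3)^{op}$ via Theorem~\ref{thm-4.6} and its opposite, the bridges $(3)\Leftrightarrow(4)^{op}$ and $(3)^{op}\Leftrightarrow(4)$ via Proposition~\ref{prop-4.7} and its opposite, the trivial implications $(2)\Rightarrow(4)$ and $(5)\Rightarrow(4)$, and $(1)\Rightarrow(5)$ from the Gorenstein hypothesis. The only cosmetic differences are that the paper dispatches $(1)\Rightarrow(5)$ by citing \cite[Corollary~11.5.3]{EJ} directly rather than arguing via finite Gorenstein projective dimension, and declares $(5)\Rightarrow(4)$ ``trivial'' without spelling out the $\Ext$-commutes-with-sums step you (rightly) flag as the one place needing care.
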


\begin{proof}
It is trivial that $(5)\Longrightarrow (4)$ and $(2)\Longrightarrow (4)$.
By Proposition \ref{prop-4.7} and its symmetric version, we have $(1)\Longrightarrow (2)$
and $(3)\Longleftrightarrow (4)^{op}$. By Theorem \ref{thm-4.6} and its symmetric version,
we have $(1)\Longleftrightarrow (3)\Longleftrightarrow (3)^{op}$.
By \cite[Corollary 11.5.3]{EJ}, we have $(1)\Longrightarrow (5)$.

By symmetry, the proof is finished.
\end{proof}

\subsection{Small Auslander-type conditions}

Recall from \cite{H2} that $R$ is called {\it left quasi Auslander} if $_RR\in\mathcal{G}_{\infty}(1)$.
Compare the following result with Theorem \ref{thm-4.9}.

\begin{theorem}\label{thm-4.10}
Let $R$ be a left quasi Auslander algebra. Then the following statements are equivalent.
\begin{enumerate}
\item[$(1)$] $R$ is Gorenstein.
\item[$(2)$] $\id_RR<\infty$.
\item[$(3)$] $\id_{R^{op}}R<\infty$.
\item[$(4)$] $R$ is left and right weakly Gorenstein.
\item[$(5)$] $R$ is right weakly Gorenstein.
\item[$(6)$] $\mathcal{GP}(\Mod R^{op})={^{\bot}{\mathcal{P}(\Mod R^{op})}}$.
\end{enumerate}
\end{theorem}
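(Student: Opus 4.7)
The plan is to reduce Theorem \ref{thm-4.10} to machinery already in place: Theorem \ref{thm-4.6} (specialised to $m=1$), Proposition \ref{prop-4.7}, and, for the one delicate implication involving condition (3), Theorem \ref{thm-4.9}. Because $R$ is left quasi Auslander, the hypothesis $_RR\in\mathcal{G}_{\infty}(1)$ is in force throughout, so both of those results are directly applicable on the ``left'' side without any extra work.

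First I would establish the backbone $(1)\Leftrightarrow(2)\Leftrightarrow(4)\Leftrightarrow(5)\Leftrightarrow(6)$. Since $_RR\in\mathcal{G}_{\infty}(1)$, Theorem \ref{thm-4.6} (with $m=1$) gives $(1)\Leftrightarrow(2)$, and Proposition \ref{prop-4.7}(1) gives $(2)\Leftrightarrow(5)$. The chain $(1)\Rightarrow(4)\Rightarrow(5)$ is obtained from the forward direction of Proposition \ref{prop-4.7} together with its symmetric version (Gorenstein forces both left and right weak Gorensteinness), while $(4)\Rightarrow(5)$ is tautological. For $(1)\Rightarrow(6)$ I would cite \cite[Corollary 11.5.3]{EJ} applied to $R^{op}$, exactly as in the proof of Theorem \ref{thm-4.9}. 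Finally, $(6)\Rightarrow(5)$ comes from intersecting with $\mod R^{op}$: for an Artin algebra a finitely generated module is Gorenstein projective in $\Mod R^{op}$ iff it is so in $\mod R^{op}$, and $\bot\mathcal{P}(\Mod R^{op})\cap\mod R^{op}=\bot R_R\cap\mod R^{op}$ since $R_R$ is a projective generator.

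The remaining piece is $(3)\Leftrightarrow$ the rest. The direction $(1)\Rightarrow(3)$ is immediate from the definition of Gorenstein. For $(3)\Rightarrow(1)$, the trick is to promote the hypothesis $\id_{R^{op}}R\le m'<\infty$ into a right Auslander-type condition: by the example recorded in Section 2 (namely \cite[Example 4.2(3)]{H5}), if $\id_{R^{op}}R\le m'$ then every module in $\Mod R^{op}$ is $G_{\infty}(m')$, so in particular $R_R\in\mathcal{G}_{\infty}(m')^{op}$. Combined with the standing hypothesis $_RR\in\mathcal{G}_{\infty}(1)$, this puts us exactly in the situation of Theorem \ref{thm-4.9} with $m=1$ and $m'=\id_{R^{op}}R$, from which we read off that $R$ is Gorenstein.

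The only place where one has to think is $(3)\Rightarrow(1)$: it requires the observation that finite right self-injective dimension automatically upgrades to a right-sided Auslander-type condition, which is what allows Theorem \ref{thm-4.9} to be invoked in the absence of any a priori right-sided hypothesis in Theorem \ref{thm-4.10}. Everything else is a direct bookkeeping of implications already proved.
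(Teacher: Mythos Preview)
Your argument for $(3)\Rightarrow(1)$ has a genuine gap. You claim, citing \cite[Example~4.2(3)]{H5}, that $\id_{R^{op}}R\le m'$ forces every module in $\Mod R^{op}$ to be $G_{\infty}(m')$, and hence $R_R\in\mathcal{G}_{\infty}(m')^{op}$. But the statement recorded in Section~2 reads: if $\id_{R^{op}}R\le m$, then every module in $\Mod R$ (that is, every \emph{left} module) is $G_{\infty}(m)$. Applying it with $R^{op}$ in place of $R$ yields: if $\id_{R}R\le m'$, then every module in $\Mod R^{op}$ is $G_{\infty}(m')$. Thus the hypothesis that actually produces $R_R\in\mathcal{G}_{\infty}(m')^{op}$ is $\id_{R}R<\infty$, i.e.\ condition~$(2)$, not condition~$(3)$. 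Your ``upgrade'' therefore presupposes what you are trying to deduce, and Theorem~\ref{thm-4.9} cannot be invoked from $(3)$ alone. Concretely, from $\id_{R^{op}}R<\infty$ one does not know that the terms $E^i(R_R)$ have finite flat (equivalently projective) dimension over $R^{op}$; that bound is controlled by $\id_{R}R$, not by $\id_{R^{op}}R$.

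The paper closes this gap differently: it cites \cite[Corollary~4]{H1}, which under the left quasi Auslander hypothesis $_RR\in\mathcal{G}_{\infty}(1)$ directly gives $(1)\Leftrightarrow(2)\Leftrightarrow(3)$, without any right-sided Auslander-type assumption. The rest of your write-up (namely $(1)\Leftrightarrow(2)$ via Theorem~\ref{thm-4.6}, $(2)\Leftrightarrow(5)$ via Proposition~\ref{prop-4.7}(1), $(1)\Rightarrow(4)\Rightarrow(5)$, and $(1)\Rightarrow(6)\Rightarrow(5)$) is correct and agrees with the paper's proof.
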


\begin{proof}
It is trivial that $(4)\Longrightarrow (5)$ and $(6)\Longrightarrow (5)$.

By Proposition \ref{prop-4.7} and its symmetric version, we have $(1)\Longleftrightarrow (4)$.
By \cite[Corollary 4]{H1}, we have $(1)\Longleftrightarrow (2)\Longleftrightarrow (3)$.
By Proposition \ref{prop-4.7}(1), we have $(2)\Longleftrightarrow (5)$.
By \cite[Corollary 11.5.3]{EJ}, we have $(1)\Longrightarrow (6)$.
\end{proof}

Theorem \ref{thm-4.10} means that over a left quasi Auslander Artin algebra, {\bf GSC} holds true,
but we do not know whether {\bf WGSC} holds true or not.

Recall that $R$ is called {\it Auslander--Gorenstein} if $R$ satisfies the Auslander condition
and $R$ is Gorenstein. In the following result, assertions (5)--(7) are finitely generated versions
of (2)--(4) respectively.

\begin{theorem}\label{thm-4.11}
The following statements are equivalent.
\begin{enumerate}
\item[$(1)$] $R$ is Auslander--Gorenstein.
\item[$(2)$] $R$ satisfies the Auslander condition and $\mathcal{GP}(\Mod R)={^{\bot}{\mathcal{P}(\Mod R)}}$.
\item[$(3)$] $\mathcal{GP}(\Mod R)=\mathcal{G}_{\infty}(0)$.
\item[$(4)$] $\mathcal{GP}(\Mod R)^{\leq s}=\mathcal{G}_{\infty}(s)$ for any $s\geq 0$.
\item[$(5)$] $R$ satisfies the Auslander condition and $R$ is left weakly Gorenstein.
\item[$(6)$] $\mathcal{GP}(\mod R)=\mathcal{G}_{\infty}(0)\cap\mod R$.
\item[$(7)$] $\mathcal{GP}(\mod R)^{\leq s}=\mathcal{G}_{\infty}(s)\cap\mod R$ for any $s\geq 0$.
\item[$(i)^{op}$] Opposite version of $(i)$ with $2\leq i\leq 7$.
\end{enumerate}
\end{theorem}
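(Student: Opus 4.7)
The plan is to derive all the equivalences by specializing the two main results already established, namely Theorem \ref{thm-4.6} (with $m=0$) and Theorem \ref{thm-4.9} (with $m=m'=0$), and then appeal to left-right symmetry for the opposite versions. The crucial observation upfront is that by \cite[Theorem 3.7]{FGR} the Auslander condition is left-right symmetric, so $_RR\in\mathcal{G}_{\infty}(0)$ is equivalent to $R_R\in\mathcal{G}_{\infty}(0)^{op}$, and both amount to the ring $R$ satisfying the Auslander condition. This means the asymmetric hypothesis ``$_RR\in\mathcal{G}_{\infty}(0)$ and $R$ is Gorenstein'' appearing in Theorem \ref{thm-4.6}(1) with $m=0$ is exactly condition (1) here.

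First I would handle the block $(1)\Leftrightarrow(3)\Leftrightarrow(4)\Leftrightarrow(6)\Leftrightarrow(7)$ by invoking Theorem \ref{thm-4.6} with $m=0$. Since $\mathcal{GP}(\Mod R)^{\leq 0}=\mathcal{GP}(\Mod R)$, clause (3) of Theorem \ref{thm-4.6} collapses to the equality $\mathcal{GP}(\Mod R)=\mathcal{G}_{\infty}(0)$, which is precisely (3) here; clause (4) reads as (4) here verbatim; and the finitely generated versions (5) and (6) of Theorem \ref{thm-4.6} specialize to (6) and (7) here, respectively. So the block drops out of a single application of Theorem \ref{thm-4.6}.

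Next I would handle the block $(1)\Leftrightarrow(2)\Leftrightarrow(5)$ by applying Theorem \ref{thm-4.9} with $m=m'=0$. Under the Auslander condition both hypotheses $_RR\in\mathcal{G}_{\infty}(0)$ and $R_R\in\mathcal{G}_{\infty}(0)^{op}$ are satisfied, so Theorem \ref{thm-4.9} delivers that $R$ is Gorenstein if and only if $\mathcal{GP}(\Mod R)={^{\bot}\mathcal{P}(\Mod R)}$, and if and only if $R$ is left weakly Gorenstein; this yields $(1)\Leftrightarrow(2)$ and $(1)\Leftrightarrow(5)$ simultaneously.

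Finally, the opposite versions $(i)^{op}$ for $2\leq i\leq 7$ follow by symmetry: since both the Auslander condition and Gorensteinness are left-right symmetric, $R$ is Auslander--Gorenstein if and only if $R^{op}$ is, and so applying the already-proved equivalences (2)--(7) to $R^{op}$ in place of $R$ produces $(2)^{op}$--$(7)^{op}$. I do not foresee any serious obstacle: the heavy lifting is contained in Theorems \ref{thm-4.6} and \ref{thm-4.9}, and the only care needed is the bookkeeping between the all-module and finitely generated versions, which is already packaged inside Theorem \ref{thm-4.6}. The one point worth flagging is that (2) and (5) explicitly build in the Auslander condition, and this is exactly what is required to activate the hypotheses of Theorem \ref{thm-4.9} so that ``weakly Gorenstein'' or the orthogonality identity can be upgraded to Gorensteinness.
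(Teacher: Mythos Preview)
Your proposal is correct and follows essentially the same route as the paper: both proofs reduce the theorem to Theorems \ref{thm-4.6} and \ref{thm-4.9} specialized at $m=0$ (using the left-right symmetry of the Auslander condition), and then conclude the $(i)^{op}$ versions by symmetry. The only cosmetic difference is that the paper also cites Proposition \ref{prop-3.5} for $(3)\Leftrightarrow(4)$ and $(6)\Leftrightarrow(7)$ and Proposition \ref{prop-4.5} for $(6)\Rightarrow(1)$, whereas you observe---correctly---that these are already subsumed in Theorem \ref{thm-4.6} once $m=0$ collapses the chains of inclusions to equalities.
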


\begin{proof}
The implications $(2)\Longrightarrow (5)$, $(3)\Longrightarrow (6)$ and $(4)\Longrightarrow (7)$  are trivial.

Note that $R$ satisfies the Auslander condition if and only if ${_RR}\in\mathcal{G}_{\infty}(0)$
and ${R_R}\in\mathcal{G}_{\infty}(0)^{op}$, and if and only if ${_RR}\in\mathcal{G}_{\infty}(0)$
or ${R_R}\in\mathcal{G}_{\infty}(0)^{op}$.
The implications $(3)\Longleftrightarrow (4)$ and $(6)\Longleftrightarrow (7)$
follow from Proposition \ref{prop-3.5}(1)(2) respectively.
The implication $(6)\Longrightarrow (1)$ follows from Proposition \ref{prop-4.5} and \cite[Corollary 5.5(b)]{AR1}.
The implications $(1)\Longleftrightarrow (3)$ and $(1)\Longleftrightarrow (2)\Longleftrightarrow (5)$
follow from Theorems \ref{thm-4.6} and \ref{thm-4.9}, respectively.

By symmetry, the proof is finished.
\end{proof}

Let $M$ be an $R$-module. An injective coresolution
$$0\to M\to E^0\buildrel{\delta^1}\over\longrightarrow E^1\buildrel{\delta^2}\over\longrightarrow\cdots
\buildrel{\delta^n}\over\longrightarrow E^n\buildrel{\delta^{n+1}}\over\longrightarrow\cdots$$
is called {\it ultimately closed} if there exists some $n$ such that $\Im\delta^n=\oplus W_j$
with each $W_j$ isomorphic to a direct summand of some $\Im\delta_{i_j}$ with $i_j<n$.
It is clear that a left $R$-module $M$ has an ultimately closed injective coresolution if $\id_RM<\infty$.
An algebra $R$ is said to be of {\it ultimately closed type} if the minimal injective coresolution of
any finitely generated left $R$-module is ultimately closed \cite{T}.
The class of algebras of ultimately closed type includes:
(1) Artin algebras with finite global dimension; (2) Artin algebras with radical square zero;
(3) Representation-finite algebras; (4) Artin algebras $R$ with Loewy length $m$ such that
$R/J^{m-1}$ is representation-finite, where $J$ is the Jacobson radical of $R$ \cite[p.110]{T}.

Recall from \cite{R} that $R$ is called {\it torsionless-finite} if there exists only
finitely many isomorphism classes of indecomposable torsionless modules in $\mod R$.
We claim that any torsionless-finite algebra is of ultimately closed type. Let $R$ be a torsionless-finite algebra.
It follows from \cite[Corollary 2.2]{R} that $R^{op}$ is also a torsionless-finite algebra and there exists only
finitely many isomorphism classes of indecomposable torsionless modules in $\mod R^{op}$. Using the usual duality
between $\mod R$ and $\mod R^{op}$ yields that there exists only
finitely many isomorphism classes of indecomposable 1-cosyzygy modules in $\mod R$. Thus $R$ is of ultimately closed type.
The claim is proved. The class of torsionless-finite algebras includes:
(1) Artin algebras $R$ with $R/\soc(R_R)$ representation-finite, where $\soc(R_R)$ is the socle of $R_R$;
(2) Minimal representation-infinite algebras;
(3) Artin algebras stably equivalent to hereditary algebras;
(4) Left or right glued algebras; and (5) Special biserial algebras without indecomposable projective-injective
modules \cite[Section 5]{R}.

Note that algebras $R$ such that $_RR$ has an ultimately closed injective coresolution
(particularly, algebras $R$ of ultimately closed type) are
right weakly Gorenstein algebras by the symmetric versions of \cite[Theorem 2.4]{HT} and \cite[Theorem 1.2]{RZ}.
However, such algebras are not Gorenstein in general, thus the following result can be regarded as a reduction of {\bf ARC}.

\begin{corollary}\label{cor-4.12}
If $R$ satisfies the Auslander condition, then the following statements are equivalent.
\begin{enumerate}
\item[$(1)$] $R$ is Gorenstein.
\item[$(2)$] $R$ is left or right weakly Gorenstein. 
\item[$(3)$] $R$ is left and right weakly Gorenstein.
\item[$(4)$] $\mathcal{GP}(\mod R)=\mathcal{T}(\mod R)$.
\item[$(5)$] $\mathcal{GP}(\mod R)=\mathcal{T}(\mod R)={^{\bot}{_RR}}\cap\mod R$.
\end{enumerate}
\end{corollary}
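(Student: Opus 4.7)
The plan is to leverage Theorem~\ref{thm-4.9} with $m=m'=0$ together with the identification $\mathcal{T}(\mod R)=\Omega^{\infty}(\mod R)$ that is available under the Auslander condition. Recall that the Auslander hypothesis is equivalent to ${_RR}\in\mathcal{G}_{\infty}(0)$ combined with $R_R\in\mathcal{G}_{\infty}(0)^{op}$, so Theorem~\ref{thm-4.9} with $m=m'=0$ applies directly. The implications $(1)\Rightarrow(3)\Rightarrow(2)$ are formal; for $(2)\Rightarrow(1)$ I would treat each disjunct in (2) separately, since both $(4)\Rightarrow(1)$ and its opposite version inside Theorem~\ref{thm-4.9} do exactly this. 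That settles $(1)\Leftrightarrow(2)\Leftrightarrow(3)$.

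Next I would prove $(1)\Rightarrow(5)$. Because ${_RR}\in\mathcal{G}_{\infty}(0)\subseteq\mathcal{G}_{\infty}(1)$, the same reasoning used in the proof of Proposition~\ref{prop-4.7} (via \cite[Proposition 1.6(a)]{AR2} and the symmetric version of \cite[Theorem 0.1]{AR2}, or equally via \cite[Theorem 3.4]{HI}) yields $\mathcal{T}(\mod R)=\Omega^{\infty}(\mod R)$. If $R$ is Gorenstein then $\id_RR<\infty$, and Theorem~\ref{thm-3.6}(2) provides $\mathcal{GP}(\Mod R)=\Omega^{\infty}(\Mod R)$; intersecting with $\mod R$ produces $\mathcal{GP}(\mod R)=\Omega^{\infty}(\mod R)=\mathcal{T}(\mod R)$. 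Since a Gorenstein algebra is in particular left weakly Gorenstein, $\mathcal{GP}(\mod R)={^{\bot}{_RR}}\cap\mod R$ holds as well, giving all three equalities in (5).

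Finally, $(5)\Rightarrow(4)$ is trivial, so the loop closes via $(4)\Rightarrow(1)$, which I expect to be the main obstacle. Under the Auslander condition the equality $\mathcal{T}(\mod R)=\Omega^{\infty}(\mod R)$ converts (4) into $\mathcal{GP}(\mod R)=\Omega^{\infty}(\mod R)$; in particular every $\infty$-syzygy sits inside ${^{\bot}{_RR}}\cap\mod R$, because $\mathcal{GP}(\mod R)\subseteq{^{\bot}{_RR}}\cap\mod R$ by definition. Meanwhile Lemma~\ref{lem-4.4} applied to ${_RR}\in\mathcal{G}_{\infty}(0)$ furnishes an integer $n\geq 0$ with $\pd_RE^i({_RR})\leq n$ for every $i\geq 0$. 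Feeding both inputs into Proposition~\ref{prop-4.3}(1) with $m=0$ gives $\id_RR\leq n$, and then Theorem~\ref{thm-4.6} with $m=0$ upgrades this finiteness to $R$ being Gorenstein. The delicate point is to ensure that both ingredients---the equality $\Omega^{\infty}(\mod R)=\mathcal{T}(\mod R)$ and the uniform bound on $\pd_RE^i({_RR})$---are drawn purely from the Auslander hypothesis, so that no circularity arises with the Gorenstein conclusion one is trying to prove.
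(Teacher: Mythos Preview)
Your proof is correct and follows essentially the same approach as the paper, just organized differently. The paper compresses everything into two lines: it invokes \cite[Lemma 5.7]{H5} to obtain $\mathcal{G}_{\infty}(0)\cap\mod R=\Omega^{\infty}(\mod R)=\mathcal{T}(\mod R)$ under the Auslander condition, and then observes that this identification turns (4) and (5) of Corollary~\ref{cor-4.12} into condition (6) of Theorem~\ref{thm-4.11}, so that the whole equivalence is read off from Theorem~\ref{thm-4.11}. You instead bypass Theorem~\ref{thm-4.11} and unpack its content: Theorem~\ref{thm-4.9} for $(1)\Leftrightarrow(2)\Leftrightarrow(3)$, and for $(4)\Rightarrow(1)$ the chain Lemma~\ref{lem-4.4} $+$ Proposition~\ref{prop-4.3}(1) $+$ Theorem~\ref{thm-4.6}, which is exactly what underlies Proposition~\ref{prop-4.5} and the implication $(6)\Rightarrow(1)$ in Theorem~\ref{thm-4.11}. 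The only cosmetic difference is in $(1)\Rightarrow(5)$, where you reach $\mathcal{GP}(\mod R)=\Omega^{\infty}(\mod R)$ via Theorem~\ref{thm-3.6}(2), whereas the paper's route (through Theorem~\ref{thm-4.11}(6) and Theorem~\ref{thm-4.6}) goes through $\mathcal{G}_{\infty}(0)\cap\mod R$ instead; both paths land on the same equality once the identification $\mathcal{T}(\mod R)=\Omega^{\infty}(\mod R)$ is in hand.
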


\begin{proof}
Since $R$ satisfies the Auslander condition, we have
$$\mathcal{G}_{\infty}(0)\cap\mod R=\Omega^{\infty}(\mod R)=\mathcal{T}(\mod R)$$
by \cite[Lemma 5.7]{H5}. Now the assertion follows from Theorem \ref{thm-4.11}.
\end{proof}

As indicated above, if $_RR$ (resp. $R_R$) has an ultimately closed injective coresolution,
then $R$ is a right (resp. left) weakly Gorenstein algebra.
As a consequence of Corollary \ref{cor-4.12}, we obtain the following result.

\begin{corollary}\label{cor-4.13}
{\bf ARC} holds true for the following classes of algebras $R$.
\begin{enumerate}
\item[$(1)$] $_RR$ or $R_R$ has an ultimately closed injective coresolution.
\item[$(2)$] Algebras of ultimately closed type.
\end{enumerate}
\end{corollary}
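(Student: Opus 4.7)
The plan is to leverage Corollary \ref{cor-4.12} to reduce \textbf{ARC} to the weakly Gorenstein property, which is then supplied by the ultimately closed assumption. So I will start by assuming $R$ satisfies the Auslander condition, as this is the hypothesis of \textbf{ARC}. By Corollary \ref{cor-4.12}, under the Auslander condition, Gorensteinness is equivalent to one-sided weak Gorensteinness. Hence the entire task reduces to verifying that, in each of the two listed classes, $R$ is left or right weakly Gorenstein.

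For part (1), suppose first that $_RR$ has an ultimately closed injective coresolution. The paragraph preceding the corollary (appealing to the symmetric versions of \cite[Theorem 2.4]{HT} and \cite[Theorem 1.2]{RZ}) records that such an algebra is automatically right weakly Gorenstein. Symmetrically, if $R_R$ has an ultimately closed injective coresolution, then $R$ is left weakly Gorenstein. In either case, Corollary \ref{cor-4.12} (the equivalence $(2)\Longleftrightarrow (1)$) forces $R$ to be Gorenstein, which is \textbf{ARC} for this class.

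For part (2), I only need to observe that the defining property of an algebra of ultimately closed type is that every finitely generated left $R$-module has an ultimately closed minimal injective coresolution; in particular $_RR$ itself does. Therefore part (1) immediately applies, and $R$ is Gorenstein whenever it satisfies the Auslander condition.

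The argument is essentially a bookkeeping step gluing together Corollary \ref{cor-4.12} with the cited ``ultimately closed $\Rightarrow$ weakly Gorenstein'' principle, so there is no real obstacle; the only thing to be careful about is the left-right bookkeeping, namely remembering that an ultimately closed injective coresolution of $_RR$ gives \emph{right} weak Gorensteinness (not left), which is exactly why Corollary \ref{cor-4.12} must be invoked in its symmetric form allowing either side.
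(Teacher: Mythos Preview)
Your proof is correct and is exactly the paper's primary argument: the corollary is presented as a direct consequence of Corollary~\ref{cor-4.12}, using the observation (recorded just before the statement) that an ultimately closed injective coresolution of $_RR$ forces $R$ to be right weakly Gorenstein, and symmetrically on the other side. The paper then additionally supplies an alternative proof, independent of Corollary~\ref{cor-4.12}, via the equalities $\findim R=\id_RR=\id_{R^{op}}R=\findim R^{op}$ from \cite{AR1,HI,Z} together with the finiteness of the finitistic dimension coming from \cite{FW}; you may wish to note this second route as well, but your argument already suffices.
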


In the following, we give an alternative proof of Corollary \ref{cor-4.13}, which is independent of
Corollary \ref{cor-4.12}. Recall that the {\it finitistic dimension} $\findim R$ of $R$ is defined as
$$\findim R:=\sup\{\pd_RM\mid M\in\mod R\ \text{with}\ \pd_RM<\infty\}.$$
Assume that $R$ satisfies the Auslander condition. Then $R^{op}$ also satisfies the Auslander condition
by \cite[Theorem 3.7]{FGR}. It follows from \cite[Corollary 5.5(b)]{AR1} that $\id_RR<\infty$ if and only if
$\id_{R^{op}}R<\infty$, and hence $\id_RR=\id_{R^{op}}R$ by \cite[Lemma A]{Z}. Then
$$\findim R=\id_RR=\id_{R^{op}}R=\findim R^{op}\eqno{(4.1)}$$
by \cite[Corollary 5.3(1)]{HI}.
When $R_R$ has an ultimately closed injective coresolution, it is known from \cite[p.2983]{FW} that
this injective coresolution has a strongly redundant image in the sense of \cite{FW}. Then applying
\cite[Theorem 3]{FW} yields $\findim R^{op}<\infty$. Symmetrically, when $_RR$ has
an ultimately closed injective coresolution (particularly, when $R$ is of ultimately closed type),
we have $\findim R<\infty$. So, in both cases, we have $\id_RR=\id_{R^{op}}R<\infty$ by (4.1),
that is, $R$ is Gorenstein. Consequently we conclude that {\bf ARC} holds true for $R$.

\vspace{0.5cm}

{\bf Acknowledgements.} The author thanks the referees for very helpful and detailed suggestions.

\end{document}